\theoremstyle{plain}
\newtheorem{teor}{Theorem}
\numberwithin{teor}{section}
\numberwithin{equation}{section}
\theoremstyle{definition}
\newaliascnt{defi}{teor}
\newtheorem{defi}[defi]{Definition}
\theoremstyle{plain}
\newaliascnt{lemma}{teor}%
\theoremstyle{plain}
\newaliascnt{prop}{teor}%
\newtheorem{prop}[prop]{Proposition}
\theoremstyle{plain}
\newaliascnt{cor}{teor}%
\theoremstyle{definition}
\newaliascnt{ex}{teor}%
\theoremstyle{definition}
\newaliascnt{oss}{teor}%
\newtheorem{oss}[oss]{Remark}
\theoremstyle{plain}
\newtheorem{open}{Open problem}
\DeclarePairedDelimiter{\abs}{\lvert}{\rvert}
\newcommand{\R}{\mathbb{R}}
\newcommand{\Ln}{\mathcal{L}^n}
\newcommand{\Hn}{\mathcal{H}^{n-1}}
\newcommand{\eps}{\varepsilon}
\DeclareMathOperator{\divv}{div}
\newcommand{\leqnomode}{\tagsleft@true\let\veqno\@@leqno}
\newcommand{\reqnomode}{\tagsleft@false\let\veqno\@@eqno}
\title{On the optimal shape of a thin insulating layer}
\author{P. Acampora, E. Cristoforoni, C. Nitsch, C. Trombetti}
\date{}
\newcommand{\Addresses}{{%
 \bigskip 
 \footnotesize 
 
 \textsc{Dipartimento di Matematica e Applicazioni ``R. Caccioppoli'', Universit\`a degli studi di Napoli Federico II, Via Cintia, Complesso Universitario Monte S. Angelo, 80126 Napoli, Italy.}\par\nopagebreak 
 
 \medskip 
 
 \textit{E-mail address}, P.~Acampora: \texttt{paolo.acampora@unina.it} 
  
 \medskip 
 
 \textit{E-mail address}, C.~Nitsch: \texttt{c.nitsch@unina.it}

  \medskip 
 
 \textit{E-mail address}, C.~Trombetti: \texttt{cristina@unina.it} 
 
 \medskip 
 
\textsc{Mathematical and Physical Sciences for Advanced Materials and Technologies, Scuola Superiore Meridionale, Largo San Marcellino 10, 80126, Napoli, Italy.}\par\nopagebreak 
 
 \medskip 
 
 \textit{E-mail address}, E.~Cristoforoni: \texttt{emanuele.cristoforoni@unina.it} 
}}
\begin{document}
\reversemarginpar
\maketitle
\begin{abstract}
    We are interested in the thermal insulation of a bounded open set $\Omega$ surrounded by a set whose thickness is locally described by $\eps h$, where $h$ is a non-negative function defined on the boundary $\partial\Omega$. We study the problem in the limit for $\eps$ going to zero using a first-order asymptotic development by $\Gamma$-convergence. 
    
    \textsc{Keywords:} Robin boundary condition, thermal insulation, reinforcement, $\Gamma$-convergence

    \textsc{MSC 2020:} 49J45, 35J25, 35B06, 80A19
\end{abstract}
\renewcommand*{\sectionautorefname}{Section}
\renewcommand*{\subsectionautorefname}{Subsection}
\section{Introduction}
Energy efficiency has emerged as one of the most pressing issues in recent years, as it is critical to achieving sustainable development, lowering greenhouse gas emissions, and mitigating climate change, all while boosting economic growth and increasing quality of life.
Thermal insulation is important in the context of energy efficiency because it helps to reduce heat transfer and energy losses in buildings and industrial processes, resulting in significant energy and cost savings.

This paper addresses the topic of thermal insulation of a solid, kept at constant temperature, by displacing around it an insulator. Roughly speaking, the optimal insulation is the one minimizing, at thermal equilibrium, the heat rate loss per unit time across the exterior boundary. More specifically, in our model heat exchange with the environment occurs through convection, which is by far the most common mechanism in real world applications. 

In order to contain costs of insulation, the volume of the insulator is prescribed. The resulting mathematical model gives rise to a free boundary problem which has been previously studied in \cite{CK} and for more general heat transfer mechanisms (including for instance also radiation) in \cite{BNNT}. In particular, the former paved the way on how to prove the existence of an optimal distribution of insulator and the regularity of its a priori unknown boundary. %

However, although a solution always exists for every set to be insulated and every amount of insulator, very little is known about the optimal shape of the insulator, and the only completely solved case happens to be the radial one.

This work addresses the problem of qualitatively and quantitatively describing how to displace the insulator, and to do so, it restricts the analysis to "small insulation thickness", an approximation reasonable in many contexts, such as in the case of buildings and other big structures. %

Throughout the paper, $\Omega$ represents the body to insulate, in which the temperature is fixed. Since the problem is invariant under temperature scaling and translations, without loss of generality the temperature of the body will be $1$, while the environment temperature will be $0$.
If convection is the leading mechanism of thermal exchange with the environment, then the heat rate loss per unit time and unit surface is proportional to the temperature jump across the surface element separating the insulator (or the body) with the environment. The constant of proportionality will be denoted by $\beta>0$. If $\Sigma$ denotes the insulator (see figure), then at equilibrium the temperature distribution in $\Sigma$ is an harmonic function which we denote by $u$. 
\begin{figure}
\caption{Body $\Omega$ with a thin insulating layer $\Sigma$}
\centering
\begin{tikzpicture}
    \node[anchor=south west,inner sep=0,opacity=0.6] (image) at (0,0) {\includegraphics[width=1\textwidth]{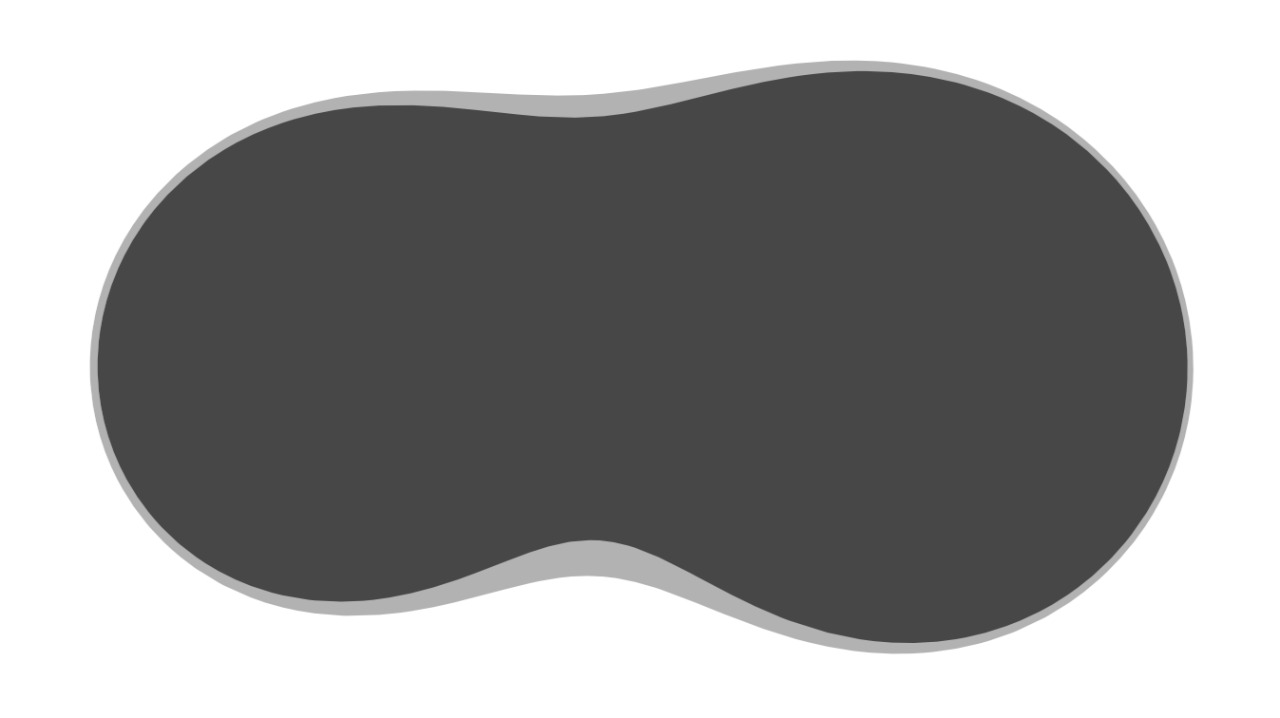}};
    \begin{scope}[x={(image.south east)},y={(image.north west)}]
        \node[anchor=south west] at (0.5, 0.5) {\Huge$\Omega$};
        \node[anchor=south west] at (0.43,0.10) {\Huge $\Sigma$};
    \end{scope}
\end{tikzpicture}
\end{figure}
On the portion of boundary that $\Sigma$ shares with $\Omega$ the function $u$ is equal to $1$. While on the portion of boundary which $\Sigma$ shares with the environment, $u$ satisfies a Robin condition $$\frac{\partial u}{\partial \nu}+\beta u=0,$$
where $\nu$ is outer normal of the boundary of $\Sigma$.
In fact, %
according to Fourier law, which holds inside $\Sigma$, the heat flux per unit time is $-Du$, and therefore the flux across the surface element at the boundary is $-\frac{\partial u}{\partial \nu}$. On the other hand, according to convection law, the heat flux per unit time across the surface element is equal to $\beta u$ ($\beta$ times the jump of $u$). Continuity of the heat flux enforces to equalize these expressions and the Robin b.c. naturally arises.

In summary, $u$ is a continuous function in $\Omega\cup\Sigma$ which solves
\[\begin{cases}\Delta u=0 &\text{in }\Sigma,\\[5 pt]
 u=1 &\text{in }\Omega,\\[5 pt]
\dfrac{\partial u}{\partial \nu}+\beta u=0 &\text{on }\partial\Sigma\setminus\partial\Omega.
\end{cases}\]

We can also characterize the function $u$ as the minimizer of the energy functional
\[\mathcal{E}(v)=\int_{\Sigma} \abs{\nabla v}^2\,dx+\beta\int_{\partial(\Omega\cup\Sigma)} v^2\,d\Hn,\]
among all functions $v\in H^1(\Omega\cup\Sigma)$ such that $v\equiv 1$ in $\Omega$.

For a given $\Omega$, our ultimate goal would be to find the shape of $\Sigma$ which minimizes $$\int_{\partial(\Omega\cup\Sigma)} u\, d\Hn$$ among all $\Sigma$ of prescribed measure.

So far, such a problem seems to be out of reach, so we restricted our analysis to the case where the layer of the insulating material and the conductivity of the insulator are both very small. %

More precisely, let $\Omega\subset\R^n$ be a smooth bounded, open set, and let $h\colon \partial\Omega\to\R$ be a non-negative function. Denoting by $\nu$ the exterior unit normal to the boundary of $\Omega$,  we define
\[\Sigma_\eps=\Set{\sigma+t\nu(\sigma)|\sigma\in\partial\Omega,\,0<t<\eps h(\sigma)}\]
and we denote by $\Omega_\eps=\overline{\Omega}\cup\Sigma_\eps$. 
Here, the non negative parameter $\eps$ is meant to be small and we want to investigate the limit as $\eps$ vanishes. But in order to have this limit non trivial, we also assume that the conductivity of the insulator is small, namely, the heat flux inside $\Sigma_\eps$ is $-\eps Du$. In practice, we are assuming that we use a "small quantity" of a "very good" insulator.

All at once we can consider the minimization of the following energy functional
\[\mathcal{F}_\eps(v,h)=\eps\int_{\Sigma_\eps} \abs{\nabla v}^2\,dx+\beta\int_{\partial\Omega_\eps} v^2\,d\Hn,\]
where $v\in H^1(\Omega_\eps)$, with $v=1$ in $\Omega$. Here, the small parameter $\eps$ in front of the first integral is encoding the fact that the conductivity of the insulator is small. %
For given $h$, a minimum $u_{\eps,h}$ of
\begin{equation}\label{problema0}\min\Set{\mathcal{F}_\eps(v,h)|v\in H^1(\Omega_\eps),\,v=1\,\text{in }\Omega}\end{equation}
solves the boundary value problem:
\[\begin{cases}\Delta u_{\eps,h}=0 &\text{in }\Sigma_\eps,\\[5 pt]
 u_{\eps,h}=1 &\text{in }\Omega,\\[5 pt]
\eps\dfrac{\partial u_{\eps,h}}{\partial \nu_\eps}+\beta u_{\eps,h}=0 &\text{on }\partial\Omega_\eps\setminus\partial\Omega,
\end{cases}\]
where $\nu_\eps$ is the exterior unit normal to the boundary of $\Omega_\eps$.

Similar problems have been studied before in the context of thermal insulation in \cite{BCF}, \cite{friedman}, \cite{acrbibuttazzo}, and more recently in \cite{bubuni} and \cite{depiniscatro}. The limit has been performed in several ways. In our case, we are going to use $\Gamma$-convergence. But in order to extract as much information as possible about the problem we are going to perform a first order expansion in $\eps$ \cite{anzellotti1993asymptotic}, which, to our knowledge, has never been exploited in this context. \medskip 

The volume of insulator we displace is $\eps m$, for some $m>0$, and we define the volume constraint by defining the space
\begin{equation}
\label{eq: Hm}
\mathcal{H}_m=\mathcal{H}_m(\partial\Omega)=\Set{h\in L^1(\partial\Omega)| \begin{aligned} &\int_{\partial\Omega}h\,d\Hn\le m \\[3 pt]&\,h\ge0\end{aligned}}.
\end{equation}
Our problem reduces to finding the best configuration of insulating material surrounding $\Omega$, that is
\begin{equation}
\label{problema}
    \min\Set{\mathcal{F}_\eps (v,h)| \begin{aligned} &v\in H^1(\Omega_\eps), \\ &v=1\,\text{in 
    }\Omega, \\ & h\in\mathcal{H}_m\end{aligned}}.
\end{equation}\medskip

Following argument similar to those used in \cite{depiniscatro}, %
it can be proved that, for any fixed Lipschitz function $h:\partial\Omega \to [0,+\infty)$, as $\eps\to 0^+$, the functional $\mathcal{F}_\eps(\cdot,h)$ $\Gamma$-converges, in the strong $L^2(\R^n)$ topology, to the functional
\[\mathcal{F}_0(h)=\beta\int_{\partial\Omega} \dfrac{1}{1+\beta h}\,d\Hn.\]
Then, in view of the convexity of the functional with respect to $h$,
\[
\min\Set{\mathcal{F}_0(h) | h\in\mathcal{H}_m}
\]
is achieved by the constant $h=m/P(\Omega)$, where $P(\Omega)$ denotes the perimeter of $\Omega$.

Displacing the insulator uniformly around the boundary is somehow the trivial solution, the one suggested by common sense, and very common when insulating buildings. However, it is mathematically not satisfactory at all. As we expect that portions of boundary with higher (mean) curvature are less convenient to insulate with respect to those with lower curvature. Such an idea is strongly suggested by the radial cases (see for example \cite[Proposition 5.1]{dellapietra}). %

But such a kind of evidence is lost when performing the $\Gamma$-limit and therefore we decided to push our analysis a bit further.
Let 
\[K_0=\Set{v\in L^2(\R^n)|\, v=1\,\text{in } \Omega},\] 
our main result is a first-order asymptotic development by $\Gamma$-convergence (see \autoref{def:asymptotic}) for the functional $\mathcal{F}_\eps$. We denote by $H$ the mean curvature of $\Omega$ (see \autoref{def:H}) and we prove the following
\begin{teor}
\label{teorema1} 
Let $\Omega\subset\R^n$ be a bounded, open set with $C^3$ boundary, and  fix a $C^2$ function $h\colon\partial\Omega\to (0,+\infty)$. Then the functional
\[
\delta\mathcal{F}_\eps(\cdot, h)=\dfrac{\mathcal{F}_\eps(\cdot,h)-\mathcal{F}_0(h)}{\eps}
\]
$\Gamma$-converges, in the strong $L^2(\R^n)$ topology, as $\eps\to 0^+$, to
\[
\mathcal{F}^{(1)}(v,h)=\begin{cases}\displaystyle \beta\int_{\partial\Omega} \frac{Hh(2+\beta h)}{2(1+\beta h)^2}\,d\Hn &\text{if }v\in K_0, \\[10 pt]
+\infty &\text{if }v\in L^2(\R^n)\setminus K_0.\end{cases} \]
\end{teor}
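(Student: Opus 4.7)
My approach is to reduce $\mathcal{F}_\eps$ to a family of one-dimensional variational problems parametrized by $\partial\Omega$ via Fermi coordinates. For $\eps$ small, the rescaled normal map $\Phi_\eps(\sigma,s) = \sigma + \eps s\,\nu(\sigma)$ is a diffeomorphism from $\{(\sigma,s) : \sigma \in \partial\Omega,\ 0 < s < h(\sigma)\}$ onto $\Sigma_\eps$. The $C^3$ regularity of $\partial\Omega$ yields a uniform Taylor expansion of the pulled-back volume and surface elements: the Jacobian of $\Phi_\eps$ equals $\eps(1 + \eps s H(\sigma) + O(\eps^2))$, and the surface element on $\partial\Omega_\eps \setminus \partial\Omega$ pulls back to $(1 + \eps h(\sigma) H(\sigma) + O(\eps^2))\,d\Hn$ (the contribution of $\nabla h$ to the outer surface element enters only at order $\eps^2$). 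Setting $w = v \circ \Phi_\eps$ and decomposing $\nabla v$ into normal and tangential parts gives
\[
\mathcal{F}_\eps(v,h) = \int_{\partial\Omega}\left[\int_0^{h(\sigma)}\!|\partial_s w|^2\, a_\eps(\sigma,s)\,ds + \beta\, b_\eps(\sigma)\, w(\sigma, h(\sigma))^2\right] d\Hn + \eps^2\, T_\eps(w),
\]
with $a_\eps = 1 + \eps s H + O(\eps^2)$, $b_\eps = 1 + \eps h H + O(\eps^2)$, and $T_\eps(w) = \int_{\partial\Omega}\int_0^{h}|\nabla^\sigma w|^2 a_\eps\,ds\,d\Hn \geq 0$ the tangential contribution.

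For the $\Gamma$-liminf, the key observation is that $T_\eps \geq 0$ may be discarded. By Fubini, the remaining energy is then bounded below by the fiberwise minimum at each $\sigma$. The weighted one-dimensional problem
\[
\min\left\{\int_0^{h(\sigma)}\!|w'|^2 a_\eps\,ds + \beta b_\eps\, w(h(\sigma))^2 \;\middle|\; w(0) = 1\right\}
\]
is solved explicitly from $(a_\eps w')' = 0$ together with the Robin condition at $s = h(\sigma)$, yielding the value $\beta b_\eps/(1 + \beta b_\eps I_\eps)$ with $I_\eps = \int_0^{h(\sigma)} 1/a_\eps\,ds$. A direct Taylor expansion gives
\[
\frac{\beta b_\eps}{1 + \beta b_\eps I_\eps} = \frac{\beta}{1 + \beta h(\sigma)} + \eps\,\frac{\beta H(\sigma) h(\sigma)\bigl(2 + \beta h(\sigma)\bigr)}{2\bigl(1 + \beta h(\sigma)\bigr)^2} + O(\eps^2),
\]
so that $\mathcal{F}_\eps(v,h) \geq \mathcal{F}_0(h) + \eps\,\mathcal{F}^{(1)}(v,h) + O(\eps^2)$ for every admissible $v$. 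Since any sequence $v_\eps \to v$ in $L^2(\R^n)$ with finite energies must satisfy $v_\eps \equiv 1$ in $\Omega$, in the limit $v \in K_0$ and the $\Gamma$-liminf inequality follows.

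For the $\Gamma$-limsup, the explicit fiber minimizer $\bar w_\eps$ above serves as the ansatz. Define the recovery sequence by $\bar v_\eps = \bar w_\eps \circ \Phi_\eps^{-1}$ in $\Sigma_\eps$, $\bar v_\eps = 1$ in $\Omega$, and $\bar v_\eps = v$ on $\R^n \setminus \Omega_\eps$. Since $|\Sigma_\eps| = O(\eps)$ and $\bar w_\eps$ is bounded, $\bar v_\eps \to v$ in $L^2(\R^n)$. The $C^2$ regularity of $h$ and the $C^3$ regularity of $\partial\Omega$ ensure that $\bar w_\eps$ depends smoothly on $\sigma$ with $\|\nabla^\sigma \bar w_\eps\|_\infty$ bounded uniformly in $\eps$, so $\eps^2 T_\eps(\bar w_\eps) = O(\eps^2)$. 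Combined with the fiberwise calculation this matches the lower bound, giving $\delta \mathcal{F}_\eps(\bar v_\eps, h) \to \mathcal{F}^{(1)}(v, h)$.

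The main technical obstacle is to propagate a uniform $o(\eps)$ error through the two nested expansions, the geometric one for the Fermi-coordinate Jacobian and surface element, and the Taylor expansion of the explicit fiberwise minimum. The $C^3$ hypothesis on $\partial\Omega$ is precisely what is needed to bound the derivatives of the curvatures entering the Jacobian, while the $C^2$ hypothesis on $h$ underwrites the uniform control of the tangential derivatives of the recovery ansatz and hence the $O(\eps^2)$ size of $\eps^2 T_\eps(\bar w_\eps)$.
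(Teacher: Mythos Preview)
Your argument is correct and takes a genuinely different route from the paper's proof. The paper establishes the $\Gamma$-liminf by combining H\"older and Young inequalities with an integration by parts in the normal variable; this leaves a residual term of the form $\int_0^{\eps h} H u_\eps(\sigma+t\nu)/\sqrt{1+tH}\,dt$ that still depends on the unknown minimizer $u_\eps$, and to control it the paper builds explicit sub- and supersolutions $v_{1\pm\alpha}$ and appeals to the elliptic comparison principle (Proposition~3.1). For the $\Gamma$-limsup the paper then uses a separate ad hoc test function quadratic in the distance. Your approach instead recognizes that, once the tangential energy $\eps^2 T_\eps\ge 0$ is dropped, the remaining fiberwise problem $\min\{\int_0^{h}|w'|^2 a_\eps+\beta b_\eps w(h)^2 : w(0)=1\}$ can be solved \emph{exactly} for the true weights $a_\eps=J(\eps s,\sigma)$ and $b_\eps$, with closed-form value $\beta b_\eps/(1+\beta b_\eps I_\eps)$. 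A single Taylor expansion of this quantity then gives a lower bound $\mathcal{F}_\eps(v,h)\ge\mathcal{F}_0(h)+\eps\mathcal{F}^{(1)}+O(\eps^2)$ uniform in $v\in K_\eps$, and the fiberwise minimizer itself furnishes the recovery sequence, so the liminf and limsup come from the same computation. This is both shorter and more elementary, completely bypassing the PDE comparison argument; the price is only that one must keep track of the exact (not merely approximate) weights in the one-dimensional problem and verify, as you note, that the $C^2$ regularity of $h$ and the $C^3$ regularity of $\partial\Omega$ make the tangential derivatives of $\bar w_\eps$ uniformly bounded so that $\eps^2 T_\eps(\bar w_\eps)=O(\eps^2)$.
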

\medskip

The paper is planned as follows. In 
\autoref{section: Gamma} we prove \autoref{teorema1}. Thereafter, in \autoref{shapeopt} we fix $\Omega$ and we deal with the minimum problem
\[\inf\Set{\mathcal{F}_0(h)+\eps \mathcal{F}^{(1)}(h)| h\in\mathcal{H}_m},\]
where $\mathcal{F}^{(1)}(h)=\mathcal{F}^{(1)}(\chi_\Omega,h)$. As we already mentioned, the problem above, is a first-order approximation of the problem \eqref{problema} with respect to $\eps>0$. Indeed we have that (see \autoref{oss: approx}) 
\[\mathcal{F}_\eps(u_\eps,h)=\mathcal{F}_0(h)+\eps \mathcal{F}^{(1)}(h)+R(\Omega,h,\eps),\]
where $u_\eps$ is the minimizer to \eqref{problema0}, and
\[\lim_{\eps\to0^+} \dfrac{R(\Omega,h,\eps)}{\eps}=0.\]
In particular, we will prove that, as the intuition suggests, if $\eps$ is small enough then the optimal configuration for the insulating layer concentrates close to the points of $\partial\Omega$ where the mean curvature is relatively small.
Finally, in \autoref{section:rmk} we discuss the behaviour of the functional $\mathcal{F}_0+\eps \mathcal{F}^{(1)}$ under various geometrical constraints (volume, perimeter, quermassintegral) on the set $\Omega$.

\section{Notation and tools}
\subsection{$\Gamma$-convergence}
In this section, we recall some basic properties of the $\Gamma$-convergence and the asymptotic development by $\Gamma$-convergence. We refer for instance to \cite{dalmaso} and \cite{anzellotti1993asymptotic} for the following notions.
\begin{defi}
Let $X$ be a metric space and, for any $\eps>0$, let us consider the functionals $\mathcal{F}_\eps,\mathcal{F}_0 : X\to \R\cup\set{+\infty}$. We will say that \emph{$\mathcal{F}_\eps$ $\Gamma$-converges, with respect to the strong topology in $X$, as $\eps\to 0^+$} to $\mathcal{F}_0$ if for every $x\in X$ the following conditions hold:
\begin{itemize}
 \item{for every sequence $\set{x_\eps}\subset X$ converging to $x$,
 \[\liminf_{\eps\to0^+}\mathcal{F}_\eps(x_\eps)\geq \mathcal{F}_0(x);\]}
 \item{there exists a sequence $\set{x_\eps}\subset X$ converging to $x$ such that \[\limsup_{\eps\to0^+}\mathcal{F}_\eps(x_\eps)\le \mathcal{F}_0(x).\]}
\end{itemize}
\end{defi}
In particular, from the definition, if $\mathcal{F}_\eps$ $\Gamma$-converges to $\mathcal{F}_0$, for every $x\in X$ there exists a recovery sequence $\set{x_\eps}\subset X$, converging to $x$, such that
\[\lim_{\eps\to0^+} \mathcal{F}_\eps(x_\eps)=\mathcal{F}_0(x).\]
We have the following
\begin{prop}\label{prop:convminimi}
Let $X$ be a metric space and, for any $\eps>0$, let us consider the functionals $\mathcal{F}_\eps,\mathcal{F}_0 : X\to\R\cup\set{+\infty}$ such that $\mathcal{F}_\eps$ $\Gamma$-converges, with respect to the strong topology in $X$ as $\eps\to 0^+$ to $\mathcal{F}_0$. Let $\set{x_\eps}$ be a sequence in $X$ such that \[\mathcal{F}_\eps(x_\eps)=\min_X \mathcal{F}_\eps.\]
If there exists $\overline{x}\in X$ such that $x_\eps$ converges to $\overline{x}$, then
\[\mathcal{F}_0(\overline{x})=\min_X \mathcal{F}_0=\lim_{\eps\to0^+}\min_X \mathcal{F}_\eps.\]
\end{prop}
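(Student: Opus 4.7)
The plan is a direct argument combining the two clauses of $\Gamma$-convergence with the minimality of $x_\eps$. The only ingredients needed are (i) the liminf inequality applied to the convergent sequence $x_\eps\to\overline{x}$, (ii) the existence, for each competitor $y\in X$, of a recovery sequence realizing the limsup bound, and (iii) the defining inequality $\mathcal{F}_\eps(x_\eps)\le \mathcal{F}_\eps(y)$ for every $y\in X$.

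First, I would apply the liminf clause of $\Gamma$-convergence to the convergent sequence $x_\eps\to\overline{x}$, obtaining
\[\mathcal{F}_0(\overline{x})\le \liminf_{\eps\to 0^+}\mathcal{F}_\eps(x_\eps).\]
Next, I would fix an arbitrary $y\in X$ and pick a recovery sequence $y_\eps\to y$ furnished by the limsup clause, so that $\limsup_\eps \mathcal{F}_\eps(y_\eps)\le \mathcal{F}_0(y)$. Since $x_\eps$ minimizes $\mathcal{F}_\eps$, one has $\mathcal{F}_\eps(x_\eps)\le\mathcal{F}_\eps(y_\eps)$ for every $\eps>0$, and chaining these three facts yields
\[\mathcal{F}_0(\overline{x})\le \liminf_{\eps\to 0^+}\mathcal{F}_\eps(x_\eps)\le \limsup_{\eps\to 0^+}\mathcal{F}_\eps(x_\eps)\le \limsup_{\eps\to 0^+}\mathcal{F}_\eps(y_\eps)\le \mathcal{F}_0(y).\]
Since $y\in X$ is arbitrary, the outer inequality $\mathcal{F}_0(\overline{x})\le\mathcal{F}_0(y)$ forces $\mathcal{F}_0(\overline{x})=\min_X\mathcal{F}_0$.

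Finally, to identify $\lim_\eps\min_X\mathcal{F}_\eps$ with this common value, I would specialize the chain of inequalities to $y=\overline{x}$: the rightmost term then equals $\mathcal{F}_0(\overline{x})$, coincides with the leftmost, and hence every intermediate quantity collapses to $\mathcal{F}_0(\overline{x})$. This gives
\[\lim_{\eps\to 0^+}\mathcal{F}_\eps(x_\eps)=\mathcal{F}_0(\overline{x}),\]
and since $\mathcal{F}_\eps(x_\eps)=\min_X\mathcal{F}_\eps$ by hypothesis, the claim follows. There is no substantive obstacle: the argument is textbook and uses only the two one-sided bounds built into $\Gamma$-convergence together with minimality. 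The single point worth noting is that the recovery sequence depends on the competitor $y$, and it is precisely this pointwise flexibility that makes the inequality $\mathcal{F}_0(\overline{x})\le \mathcal{F}_0(y)$ hold for every $y\in X$.
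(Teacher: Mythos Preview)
Your argument is correct and is the standard textbook proof of this classical fact about $\Gamma$-convergence. The paper does not actually prove this proposition; it is stated without proof and referenced to the literature (see \cite{dalmaso}), so there is nothing to compare against beyond noting that your reasoning matches the usual approach.
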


Let
\[
m_0=\inf_X \mathcal{F}_0,
\]
and, for every $x\in X$, let
\[
\delta\mathcal{F}_\eps(x)=\frac{\mathcal{F}_\eps(x)-m_0}{\eps}
\]
\begin{defi}\label{def:asymptotic}
If there exists a functional $\mathcal{F}^{(1)}\colon X\to \R\cup\set{+\infty}$ such that $\delta\mathcal{F}_\eps$ $\Gamma$-converges, with respect to the strong topology in $X$, as $\eps\to 0^+$ to $\mathcal{F}^{(1)}$, we say that $\mathcal{F}^{(1)}$ is the \emph{first-order asymptotic development by $\Gamma$-convergence for the functional $\mathcal{F}_\eps$}.
\end{defi}

Let
\[\mathcal{U}_0=\Set{x\in X|\mathcal{F}_0(x)=m_0},\]
the interest in the previous definition is justified by the following
\begin{oss}
\label{oss: approx}
Let $\set{x_\eps}$ be a sequence in $X$ such that \[\mathcal{F}_\eps(x_\eps)=\min_X \mathcal{F}_\eps,\]
and assume that there exists $\overline{x}\in X$ such that $x_\eps$ converges to $\overline{x}$; then by \autoref{prop:convminimi} we have that $\overline{x}\in\mathcal{U}_0$ and then
\[\mathcal{F}^{(1)}(\overline{x})=\min_X \mathcal{F}^{(1)}=\lim_{\eps\to0^+}\dfrac{\mathcal{F}_\eps(x_\eps) - m_0}{\eps}.\]
In particular, we have
\[\mathcal{F}_\eps(x_\eps)=m_0+\eps \mathcal{F}^{(1)}(\bar{x})+o(\eps).\]
\end{oss}

\subsection{Calculus on hypersurfaces}
We refer to \cite{maggi} for the notions in this section. Let $\Omega\subset\R^n$ be a bounded open set with $C^1$ boundary and let $\nu$ be the outer unit normal to its boundary. For every $\sigma\in\partial\Omega$ let $\tau=\{\tau_1(\sigma),\dots,\tau_{n-1}(\sigma)\}$ be an orthonormal basis orthogonal to $\nu(\sigma)$, namely a basis for the tangent plane at $\partial\Omega$ in $\sigma$.

\begin{defi}[Tangential gradient]
Let $U\subseteq\R^n$ be an open set containing $\partial\Omega$ and let $\phi=(\phi_1,\dots,\phi_n):U\to\R^n$ be a $C^1$ function. We define the tangential gradient of $\phi$ as the matrix-valued function $D_\tau \phi\colon \partial\Omega\to\R^{n\times(n-1)}$ such that
\[(D_\tau \phi)_{i,j}=\nabla \phi_i\cdot \tau_j,\]
where $i=1,\dots,n$ and  $j=1,\dots,n-1$.
\end{defi}

\begin{defi}[Tangential Jacobian]
Let $U\subseteq\R^n$ be an open set containing $\partial\Omega$ and let $\phi: U\to\R^n$ be a $C^1$ function. We define the tangential Jacobian of $\phi$ as 
\[J_\tau \phi =\sqrt{\det\big((D_\tau \phi)^T (D_\tau \phi)\big)\,}\,.\]
\end{defi}

\begin{teor}[Area formula on surfaces]\label{teor:area}
Let $U\subseteq\R^n$ be an open set containing $\partial\Omega$, let $\phi: U\to\R^n$ be a $C^1$ function,    and let $g:\R^n\to\R$ be a positive Borel function. We have that
\[\int_{\partial\Omega} g(\phi(\sigma))\, J_\tau \phi\,d\Hn=\int_{\phi(\partial\Omega)} g(\sigma)\,d\Hn.\]
\end{teor}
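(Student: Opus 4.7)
The plan is to reduce the identity to the classical Euclidean area formula for $C^1$ maps from $\R^{n-1}$ into $\R^n$, after establishing a chain-rule factorization for the tangential Jacobian. First, using the $C^1$ regularity of $\partial\Omega$ together with a partition of unity, I would cover $\partial\Omega$ by finitely many open sets of $\R^n$ such that each piece of $\partial\Omega$ is the image of an injective $C^1$ parametrization $\psi\colon V\subset\R^{n-1}\to\R^n$. By linearity of the integrals in $g$ it suffices to prove the identity for a single chart. A tacit hypothesis is that $\phi$ is injective on $\partial\Omega$, which is automatic in the sequel since $\phi$ will be of the form $\sigma\mapsto\sigma+t(\sigma)\nu(\sigma)$ with $t$ small; without injectivity, the right-hand side must carry the usual multiplicity factor $\mathcal{H}^0(\phi^{-1}(\cdot)\cap\partial\Omega)$.

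Once localized, the classical area formula applied separately to $\psi$ and to $\phi\circ\psi$ gives
\[
\int_{\psi(V)} g(\phi(\sigma))\,J_\tau\phi(\sigma)\,d\Hn=\int_V g(\phi(\psi(y)))\,J_\tau\phi(\psi(y))\,J\psi(y)\,dy,
\]
\[
\int_{\phi(\psi(V))} g(\sigma)\,d\Hn=\int_V g(\phi(\psi(y)))\,J(\phi\circ\psi)(y)\,dy,
\]
so the claim reduces to the pointwise identity $J(\phi\circ\psi)(y)=J_\tau\phi(\psi(y))\cdot J\psi(y)$ on $V$. To prove it, fix $y\in V$ and let $T$ be the $n\times(n-1)$ matrix whose columns are an orthonormal basis $\{\tau_1,\dots,\tau_{n-1}\}$ of $T_{\psi(y)}\partial\Omega$. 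Since $\psi$ parametrizes $\partial\Omega$, the columns of $D\psi(y)$ lie in this tangent plane, so I can write $D\psi(y)=T\,A(y)$ with $A(y)=T^T D\psi(y)\in\R^{(n-1)\times(n-1)}$; using $T^TT=I_{n-1}$ this gives $J\psi(y)=|\det A(y)|$. The chain rule then yields $D(\phi\circ\psi)(y)=D\phi(\psi(y))\,T\,A(y)=D_\tau\phi(\psi(y))\,A(y)$, since by definition $D_\tau\phi=D\phi\cdot T$. Taking $\sqrt{\det(\cdot^T\cdot)}$ of both sides produces the desired factorization.

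The main obstacle is precisely this pointwise Jacobian factorization, together with the elementary check that $J_\tau\phi$ is independent of the choice of orthonormal tangent basis: a change of basis replaces $T$ by $TR$ with $R\in O(n-1)$, hence $D_\tau\phi$ by $D_\tau\phi\cdot R$, and $\sqrt{\det(D_\tau\phi^TD_\tau\phi)}$ is left invariant. Everything else is a standard partition-of-unity argument combined with the classical area formula for Lipschitz maps between Euclidean spaces.
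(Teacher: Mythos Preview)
Your proposal is correct, but there is nothing to compare it against: the paper does not prove this theorem. It is stated as a background result in the preliminaries section, with the blanket reference ``We refer to \cite{maggi} for the notions in this section,'' and no proof is given.

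Your argument is the standard one: localize via charts and a partition of unity, apply the Euclidean area formula to $\psi$ and to $\phi\circ\psi$, and reduce to the pointwise Jacobian factorization $J(\phi\circ\psi)=J_\tau\phi\cdot J\psi$. Your verification of the factorization via $D\psi=TA$ with $T^TT=I_{n-1}$ is clean and correct, as is the invariance of $J_\tau\phi$ under change of orthonormal tangent frame. Your remark about injectivity is also well taken: as written, the statement tacitly assumes $\phi$ is injective on $\partial\Omega$ (otherwise a multiplicity factor $\mathcal{H}^0(\phi^{-1}(\sigma)\cap\partial\Omega)$ is needed on the right-hand side), and indeed the only use of the theorem in the paper is for maps $\sigma\mapsto\sigma+t\nu(\sigma)$ with $t$ small, where injectivity holds by the tubular neighbourhood assumption in \autoref{setting}.
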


\begin{defi}[Tangential divergence]
Let $U\subseteq\R^n$ be an open set containing $\partial\Omega$ and let $\phi: U\to\R^n$ be a $C^1$ function. We define the tangential divergence of $\phi$ as 
\[
\divv_\tau \phi = \sum_{j=1}^{n-1} (D \phi \,\tau_j)\cdot\tau_j
\]
\end{defi}
\begin{defi}[Mean Curvature]\label{def:H}
Let $\Omega$ be a bounded open set with $C^2$ boundary and let $\nu$ be the outer unit normal to its boundary. Let $U\subseteq\R^n$ be an open set containing $\partial\Omega$ and let $X$ be a $C^1(U)$ extension of $\nu$. For any $\sigma\in\partial\Omega$ we define the \emph{mean curvature of $\partial\Omega$} as
\[H(x)=\divv_\tau \nu.\]
\end{defi}

\begin{oss}\label{ossimp}
Let $U\subseteq\R^n$ be an open set containing $\partial\Omega$, let $X\colon U\subseteq\R^n\to\R^n$ be a $C^1$ function, and let 
$\phi(x)=x+tX(x)$. By direct computations, we have that
\[J_\tau \phi(\sigma)=1+t\divv_\tau X(\sigma)+t^2R(t,\sigma)\]
where the remainder $R$ is a bounded function. In particular, if $\Omega$ has $C^2$ boundary and $X$ is an extension of $\nu$, we have   
\[J_\tau \phi(\sigma)=1+t H(\sigma)+ t^2R(t,\sigma).\]
\end{oss}
\begin{defi}[Set of Finite perimeter]
Let $E\subseteq\R^n$ be a measurable set. We define the \emph{perimeter} of $E$ as
\[
P(E)=\sup\Set{\int_E \divv\varphi\,d\Ln | \begin{aligned}
\varphi\in &\:C_c(\Omega,\R^n) \\ &\abs{\varphi}\le 1
\end{aligned}}.
\]
If $P(E)<+\infty$ we say that $E$ is a \emph{set of finite perimeter}.
\end{defi}
\begin{defi}[Generalized mean curvature] Let $\Omega\subset\R^n$ be a set of finite perimeter, and let $p\in[1,+\infty]$. We say that $\Omega$ has \emph{generalized mean curvature in $L^p$} if there exists $H_\Omega\in L^p(\partial\Omega)$ such that
\[
\int_{\partial\Omega}\divv_\tau F\,d\Hn =\int_{\partial \Omega}H_\Omega F\cdot \nu \,d\Hn,
\]
for any $F\in C^\infty_C(A;\R^n)$ with $A$ open set containing $\Omega$.
\end{defi}

\begin{teor}[Coarea formula]\label{coarea}
Let $f\colon\R^n\to\R$ be a Lipschitz function, let $g\colon\R^n\to\R$ be an $L^1(\R^n)$ function and let $U\subset\R^n$ be an open set, then
\[\int_U g(x)\abs{\nabla f(x)}\,dx=\int_{\R} \int_{U\cap\Set{f=t}} g(y)\,d\Hn(y)\,dt.\]
\end{teor}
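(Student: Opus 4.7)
My plan is to establish the two $\Gamma$-inequalities separately, relying on the normal parametrization $\Phi(\sigma,t) = \sigma + t\nu(\sigma)$ of $\Sigma_\eps$: by \autoref{ossimp} its tangential Jacobian satisfies $J(\sigma,t) = 1 + tH(\sigma) + t^2 R(\sigma,t)$, while the outer boundary $\partial\Omega_\eps\setminus\partial\Omega$ is parametrized by $\Psi_\eps(\sigma) = \sigma + \eps h(\sigma)\nu(\sigma)$ with $J_\tau\Psi_\eps(\sigma) = 1 + \eps h(\sigma) H(\sigma) + O(\eps^2)$, since a direct computation yields $\divv_\tau(h\nu) = hH$.

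For the liminf inequality, if $v \notin K_0$ then finiteness of $\mathcal{F}_\eps(v_\eps,h)$ along the sequence forces $v_\eps = 1$ in $\Omega$, so the $L^2$ limit also satisfies $v = 1$ in $\Omega$, a contradiction; hence $\liminf\delta\mathcal{F}_\eps(v_\eps,h) = +\infty$. For $v \in K_0$ I apply the area formula (\autoref{teor:area}) to pass to normal coordinates and use $\abs{\nabla v_\eps}^2 \geq \abs{\partial_t(v_\eps\circ\Phi)}^2$ so that, setting $w_\sigma(t) = v_\eps(\sigma+t\nu(\sigma))$ which satisfies $w_\sigma(0)=1$ by trace theory,
\[
\mathcal{F}_\eps(v_\eps,h) \geq \int_{\partial\Omega}\mathcal{M}(\sigma,\eps)\,d\Hn(\sigma),
\]
where $\mathcal{M}(\sigma,\eps)$ is the infimum of $\eps\int_0^{\eps h}\abs{w'}^2 J\,dt + \beta w(\eps h)^2 J_\tau\Psi_\eps$ over $w(0)=1$. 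Solving the Euler--Lagrange equation $(Jw')' = 0$ together with the free-end condition $\eps w'(\eps h)J(\eps h) + \beta w(\eps h)J_\tau\Psi_\eps = 0$ yields
\[
\mathcal{M}(\sigma,\eps) = \frac{\beta\eps\,J_\tau\Psi_\eps(\sigma)}{\eps + \beta\,J_\tau\Psi_\eps(\sigma)\int_0^{\eps h(\sigma)} J(\sigma,s)^{-1}\,ds}.
\]
A direct expansion using $\int_0^{\eps h}J^{-1}\,ds = \eps h - \tfrac{1}{2}\eps^2 h^2 H + O(\eps^3)$ gives $\mathcal{M}(\sigma,\eps) = \beta/(1+\beta h) + \eps\,\beta hH(2+\beta h)/[2(1+\beta h)^2] + O(\eps^2)$ with remainder uniform in $\sigma$ (this is where the $C^3$ hypothesis on $\partial\Omega$ and the $C^2$ hypothesis on $h$ enter), and dominated convergence closes the argument.

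For the recovery sequence, given $v \in K_0$ I set $v_\eps = v$ on $\R^n\setminus\Omega_\eps$, $v_\eps = 1$ on $\Omega$, and on $\Sigma_\eps$ take the minimizer of the flat one-dimensional problem,
\[
v_\eps(\sigma + t\nu(\sigma)) = 1 - \frac{\beta t}{\eps(1+\beta h(\sigma))},\qquad 0\le t\le\eps h(\sigma).
\]
Then $v_\eps\to v$ in $L^2(\R^n)$ because $\abs{\Sigma_\eps}\to 0$ and $\abs{v_\eps}\le 1$ on $\Sigma_\eps$. Substituting into $\mathcal{F}_\eps$, the normal derivative contributes $\eps\int_{\partial\Omega}\int_0^{\eps h}\beta^2 J/[\eps^2(1+\beta h)^2]\,dt\,d\Hn$, the tangential derivatives of $v_\eps$ are $O(1)$ and therefore produce only $O(\eps^2)$ once multiplied by $\eps\abs{\Sigma_\eps}$, and the surface term equals $\beta\int_{\partial\Omega}J_\tau\Psi_\eps/(1+\beta h)^2\,d\Hn$. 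Expanding both Jacobians via \autoref{ossimp} and using the algebraic identity $\beta^2 h/(1+\beta h)^2 + \beta/(1+\beta h)^2 = \beta/(1+\beta h)$, the zeroth-order terms reproduce $\mathcal{F}_0(h)$ and the $\eps$-coefficients combine into $\mathcal{F}^{(1)}(v,h)$.

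The main obstacle is the uniform first-order expansion of $\mathcal{M}(\sigma,\eps)$: both curvature contributions (the $tH$ term in $J$ from the interior and the $\eps hH$ term in $J_\tau\Psi_\eps$ from the surface) must be tracked to order $\eps$ and then combined algebraically into the compact integrand $hH(2+\beta h)/[2(1+\beta h)^2]$. A secondary technical point is to keep the $O(\eps^2)$ remainders uniform in $\sigma$ so as to apply dominated convergence on $\partial\Omega$, which is precisely what forces the regularity assumptions on $\partial\Omega$ and $h$ together with the strict positivity $h>0$.
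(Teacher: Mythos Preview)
Your proposal does not address the stated theorem at all. The statement in question is the classical coarea formula (\autoref{coarea}), which the paper lists among its preliminary tools and does \emph{not} prove; it is quoted from standard geometric measure theory. What you have written is instead a proof sketch of \autoref{teorema1}, the first-order $\Gamma$-development of $\mathcal{F}_\eps$. These are entirely different statements: the coarea formula is a change-of-variables identity for Lipschitz functions, whereas \autoref{teorema1} is an asymptotic variational result. Nothing in your argument (normal parametrization of $\Sigma_\eps$, one-dimensional reduction, recovery sequence) bears on the coarea identity.

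If your intention was actually to prove \autoref{teorema1}, then your outline is essentially sound but follows a route different from the paper's. For the $\Gamma$-liminf, the paper does not solve the weighted one-dimensional problem $\mathcal{M}(\sigma,\eps)$ exactly; instead it applies H\"older and Young inequalities to obtain \eqref{stimaliminf} and then compares the actual minimizer $u_\eps$ with explicit sub/super-solutions $w_{\eps,\alpha}$ via the comparison principle (\autoref{prop:subsol}). Your approach of computing $\mathcal{M}(\sigma,\eps)$ in closed form and Taylor-expanding is more direct and avoids the barrier construction, at the cost of having to justify carefully that the pointwise one-dimensional infimum is attained and that the $O(\eps^2)$ remainder is uniform in $\sigma$. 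For the $\Gamma$-limsup, the paper uses a recovery sequence with a quadratic correction $-\beta d^2 H/[2\eps(1+\beta h)^2]$; your purely linear profile $1-\beta t/[\eps(1+\beta h)]$ already suffices, since the $O(\eps)$ coefficients coming from the Jacobians combine to the correct first-order limit without the extra term. In that sense your limsup is slightly simpler than the paper's.
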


\section{The $\Gamma$-limit}
\subsection{Setting of the problem}
\label{setting}
Let $\Omega\subset\R^n$ be a bounded, open set with $C^{1,1}$ boundary, and  fix a positive Lispchitz function $h\colon\partial\Omega\to\R$. We recall that
\[\Sigma_\eps=\Set{\sigma+t\nu(\sigma)|\sigma\in\partial\Omega,\,0<t<\eps h(\sigma)},\]
and 
\[\Omega_\eps=\overline{\Omega}\cup\Sigma_\eps.\]
Our assumptions on $\partial\Omega$ ensure that there exists $\eps_0=\eps_0(\Omega,h)$ such that, if $0<\eps\le\eps_0$, the map
\[(\sigma,t)\longmapsto\sigma+t\nu(\sigma)\]
is invertible, that is, for every $x\in\Sigma_\eps$ there exist unique $\sigma(x)\in\partial\Omega$ and $t(x)$, with $0\le t(x)\le \eps h(\sigma(x))$, such that \[x=\sigma(x)+t(x)\nu(\sigma(x)).\]
Therefore, we can extend $h$ and $\nu$ on $\Sigma_\eps$ as $h(x)=h(\sigma(x))$, and $\nu(x)=\nu(\sigma(x))$ respectively. Moreover, for every $x\in \R^n\setminus\Omega$, let 
\[
d(x)=d(x,\partial\Omega)=\inf_{y\in\partial\Omega}\,\abs{x-y}
\]
be the distance from  $\partial\Omega$, then we have that $d(x)=t(x)$ for every $x\in\Sigma_\eps$.
\begin{oss}
By coarea formula (\autoref{coarea}), the area formula on surfaces (\autoref{teor:area}) and \autoref{ossimp}, we have that if $f:\Omega_\eps\to\R$ is a positive Borel function, then
\begin{equation}\label{eq:intsigma0} \int_{\Sigma_\eps}f(x)\,dx=\int_{\partial\Omega}\int_0^{\eps h(\sigma)} f(\sigma+t\nu)\left(1+\eps R_1(\sigma,t,\eps)\right)\,dt\,d\Hn\end{equation}
and 
\begin{equation}\label{eq:intdesigma0} \int_{\partial\Omega_\eps}f(\sigma)\,d\Hn=\int_{\partial\Omega} f(\sigma+\eps h\nu)\left(1+\eps R_2(\sigma,\eps)\right)\,d\Hn,\end{equation}
where the remainder terms $R_1$ and $R_2$ are bounded functions, then there exists $Q_0>0$ such that $\abs{R_1},\abs{R_2}\le Q_0$. Moreover, if $\Omega$ is a bounded open set with $C^3$ boundary, then we have
\begin{equation}\label{eq:intsigma} \int_{\Sigma_\eps}f(x)\,dx=\int_{\partial\Omega}\int_0^{\eps h(\sigma)} f(\sigma+t\nu)\left(1+tH(\sigma)+\eps^2 R_3(\sigma,t,\eps)\right)\,dt\,d\Hn\end{equation}
and 
\begin{equation}\label{eq:intdesigma} \int_{\partial\Omega_\eps}f(\sigma)\,d\Hn=\int_{\partial\Omega} f(\sigma+\eps h\nu)\left(1+\eps h(\sigma)H(\sigma)+\eps^2 R_4(\sigma,\eps)\right)\,d\Hn,\end{equation}
where the remainder terms $R_3$ and $R_4$ are bounded functions, then there exists $Q>0$ such that $\abs{R_3},\abs{R_4}\le Q$.
\end{oss}
\medskip 

Let 
\begin{equation}
\label{def: Keps}
K_\eps=\Set{v\in H^1(\Omega_\eps)|\, v=1\,\text{in }\Omega},
\end{equation}
and 
\begin{equation}
\label{def: K0}
K_0=\Set{v\in L^2(\R^n)|\, v=1\,\text{in }\Omega},
\end{equation}
and consider the functional
\[
\mathcal{F}_\eps(v,h)=
\begin{cases}
    \displaystyle \eps\int_{\Sigma_\eps} \abs{\nabla v}^2\,dx+\beta\int_{\partial\Omega_\eps} v^2\,d\Hn &\text{if }v\in K_\eps,\\[10 pt]
    +\infty &\text{if } v\in L^2(\R^n)\setminus K_\eps.
\end{cases}
\]
denoting by
\[
\mathcal{F}_0(v,h)=\begin{cases}\displaystyle \beta\int_{\partial\Omega} \dfrac{1}{1+\beta h}\,d\Hn &\text{if }v\in K_0, \\[10 pt]
+\infty &\text{if }v\in L^2(\R^n)\setminus K_0,
\end{cases}\]
following the approach of \cite{depiniscatro}, we have the following
\begin{prop}
Let $\Omega\subset\R^n$ be a bounded, open set with $C^{1,1}$ boundary, and  fix a Lispchitz function $h\colon\partial\Omega\to(0,+\infty)$. Then $\mathcal{F}_\eps(\cdot,h)$ $\Gamma$-converges, as $\eps\to0^+$,  in the strong $L^2(\R^n)$ topology, to $\mathcal{F}_0(\cdot,h)$.
\end{prop}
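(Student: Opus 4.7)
This is a standard two–inequality $\Gamma$-convergence statement; the engine is a one–dimensional minimization along each normal fiber $\{\sigma+t\nu(\sigma):0\le t\le\eps h(\sigma)\}$ of the thin tube $\Sigma_\eps$. The key observation is the elementary identity
\[
\min_{s\in\R}\left(\frac{(1-s)^2}{h}+\beta s^2\right)=\frac{\beta}{1+\beta h},\qquad\text{attained at }s=\frac{1}{1+\beta h},
\]
whose minimum value is precisely the integrand of $\mathcal{F}_0$. The area formulas \eqref{eq:intsigma0} and \eqref{eq:intdesigma0} let me reduce the full functional to this pointwise inequality, modulo harmless $1+\eps R_i$ Jacobian corrections.

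\textbf{Liminf.} Take any sequence $v_\eps\to v$ in $L^2(\R^n)$ along which $\liminf_\eps \mathcal{F}_\eps(v_\eps,h)$ is finite. Passing to a subsequence, $v_\eps\in K_\eps$, so $v_\eps\equiv 1$ on $\Omega$; by $L^2$-convergence $v\equiv 1$ on $\Omega$ and $v\in K_0$. Dropping the tangential part via $|\nabla v_\eps|^2\ge |\partial_t v_\eps|^2$ and applying \eqref{eq:intsigma0}--\eqref{eq:intdesigma0},
\[
\mathcal{F}_\eps(v_\eps,h)\ge (1-C\eps)\int_{\partial\Omega}\left(\eps\int_0^{\eps h(\sigma)}|\partial_t v_\eps|^2\,dt+\beta\, v_\eps(\sigma+\eps h(\sigma)\nu)^2\right)d\Hn.
\]
By Fubini $v_\eps(\sigma,\cdot)\in H^1(0,\eps h(\sigma))$ for $\Hn$-a.e.\ $\sigma$, with $v_\eps(\sigma,0)=1$; Cauchy--Schwarz then gives $\eps\int_0^{\eps h}|\partial_t v_\eps|^2\,dt\ge (1-s)^2/h$ with $s=v_\eps(\sigma+\eps h\nu)$. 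Invoking the minimization above, the inner parenthesis is pointwise $\ge \beta/(1+\beta h(\sigma))$, and letting $\eps\to 0^+$ yields the liminf. If $v\notin K_0$, the same subsequential reasoning forces $v_\eps\notin K_\eps$ eventually, so $\mathcal{F}_\eps(v_\eps,h)=+\infty$.

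\textbf{Recovery.} Take the explicit fiberwise optimizer
\[
v_\eps(x)=\begin{cases}1 & x\in\Omega,\\[3pt] 1-\dfrac{\beta\,d(x)}{\eps(1+\beta h(\sigma(x)))} & x\in\Sigma_\eps,\\[6pt] v(x) & x\in\R^n\setminus\Omega_\eps,\end{cases}
\]
which is continuous across $\partial\Omega$ and, by Lipschitz regularity of $h$, lies in $K_\eps$. Since $v_\eps\equiv v$ off the set $\Sigma_\eps$ of vanishing measure, absolute continuity of the integral gives $v_\eps\to v$ in $L^2(\R^n)$. Using $\partial_t v_\eps=-\beta/[\eps(1+\beta h)]$, the trace $v_\eps(\sigma+\eps h\nu)=1/(1+\beta h)$, and the bound $\eps\int_{\Sigma_\eps}|\nabla_\tau v_\eps|^2\,dx = O(\eps^2)$ (the tangential derivative is uniformly bounded while $|\Sigma_\eps|=O(\eps)$), a direct computation with \eqref{eq:intsigma0}--\eqref{eq:intdesigma0} yields
\[
\mathcal{F}_\eps(v_\eps,h)=\int_{\partial\Omega}\frac{\beta^2 h+\beta}{(1+\beta h)^2}\,d\Hn+O(\eps)=\int_{\partial\Omega}\frac{\beta}{1+\beta h}\,d\Hn+O(\eps),
\]
since $\beta^2 h+\beta = \beta(1+\beta h)$, giving $\limsup \mathcal{F}_\eps(v_\eps,h)\le \mathcal{F}_0(v,h)$.

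\textbf{Main obstacle.} The calculations are largely packaged in the remark preceding the statement; what requires care is the liminf, where one must simultaneously (i) discard the tangential component of $\nabla v_\eps$, (ii) absorb the $1+\eps R_i$ weights into a single $(1-C\eps)$ factor without disturbing the pointwise 1D minimization, and (iii) justify the fiberwise Cauchy--Schwarz step for $\Hn$-a.e.\ $\sigma$ via Fubini applied to $v_\eps\in H^1(\Omega_\eps)$. Beyond that, everything reduces to the one-variable minimization above.
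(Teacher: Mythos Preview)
Your proof is correct and follows essentially the same route as the paper's: identical recovery sequence for the limsup, and for the liminf the same reduction to a fiberwise Cauchy--Schwarz inequality followed by a pointwise quadratic optimization. The only cosmetic difference is that where you invoke the one--variable identity $\min_s\bigl((1-s)^2/h+\beta s^2\bigr)=\beta/(1+\beta h)$ directly, the paper reaches the same conclusion via Young's inequality $(1-s)^2\ge (1-\lambda)s^2+(1-1/\lambda)$ with the optimal choice $\lambda=1+\beta h$; the two are equivalent reformulations of the same pointwise bound.
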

\begin{proof}
We start by proving the $\Gamma$-liminf inequality: Let $v\in L^2(\R^n)$ and let $v_\eps\in L^2(\R^n)$ such that $v_\eps$ converges to $v$ in $L^2(\R^n)$ as $\eps\to0^+$. Up to passing to a sub-sequence, we can assume that 
\[\liminf_{\eps\to0^+}\mathcal{F}_\eps(v_\eps,h)=\lim_{\eps\to0^+}\mathcal{F}_\eps(v_\eps,h),\]
moreover, we can assume that such a limit is finite and that $v_\eps\in K_\eps$. Therefore we have that $v\in K_0$ and, by \eqref{eq:intsigma0}, \eqref{eq:intdesigma0} we have that 
\begin{equation}\label{eq:1}\int_{\Sigma_\eps}\abs{\nabla v_\eps}^2\,dx\ge\int_{\partial\Omega} \int_0^{\eps h(\sigma)}\abs{\nabla v_\eps(\sigma+t\nu)}^2 (1-\eps Q_0)\,dt\,d\Hn\end{equation}
and
\begin{equation}\label{eq:2}\int_{\partial\Omega_\eps} v_\eps^2\,d\Hn\ge\int_{\partial\Omega} v_\eps^2(\sigma+\eps h(\sigma)\nu) (1-\eps Q_0)\,d\Hn.\end{equation}
On the other hand, we have that, for $\Hn$-almost every $\sigma\in\partial\Omega$,
\[\begin{split}\int_0^{\eps h(\sigma)}\abs{\nabla v_\eps(\sigma+t\nu)}^2 \,dt\ge&\dfrac{1}{\eps h(\sigma)}\left(\int_0^{\eps h(\sigma)}\abs{\nabla v_\eps(\sigma+t\nu)}\,dt\right)^2\\[10 pt]
\ge&\dfrac{(v_\eps(\sigma+\eps h\nu)-1)^2}{\eps h(\sigma)},
\end{split}\]
then, by Young's inequality, we have that, for every $\lambda>0$ and for $\Hn$-almost every $\sigma\in\partial\Omega$, 
\begin{equation}\label{intyoung}\int_0^{\eps h(\sigma)}\abs{\nabla v_\eps(\sigma+t\nu)}^2 \,dt\ge\dfrac{(1-\lambda) v_\eps(\sigma+\eps h\nu)^2}{\eps h(\sigma)}+\dfrac{1}{\eps h(\sigma)}\left(1-\dfrac{1}{\lambda}\right).\end{equation}
Putting together \eqref{eq:1}, \eqref{eq:2} and \eqref{intyoung} we finally have
\[\mathcal{F}_\eps(v_\eps,h)\ge\int_{\partial\Omega}\left(\left(\dfrac{1-\lambda}{h}+\beta\right)v_\eps(\sigma+\eps h\nu)^2+\dfrac{1}{h}\left(1-\dfrac{1}{\lambda}\right)\right)\,d\Hn-\eps Q_0 R_\eps(\eps,v_\eps),\]
where, if $\eps$ is sufficiently small, using again \eqref{eq:intsigma0} and \eqref{eq:intdesigma0}, we have
\[R_\eps(\eps,v_\eps)\le 2\mathcal{F}_\eps(v_\eps,h).\]
Finally, letting $\lambda=\lambda(\sigma)=1+\beta h(\sigma)$, and passing to the limit as $\eps\to0^+$ we have that
\[\liminf_{\eps\to0^+}\mathcal{F}_\eps(v_\eps,h)\ge\beta\int_{\partial\Omega}\dfrac{1}{1+\beta h}\,d\Hn=\mathcal{F}_0(v,h)\]
and the $\Gamma$-liminf inequality is proved. 

\medskip
$\Gamma$-limsup inequality: Let $v\in L^2(\R^n)$, if $v\notin K_0$ the $\Gamma$-limsup inequality is trivial, therefore let $v\in K_0$.
Let
\[
v_\eps(x)=\begin{dcases}1 &\text{if }x\in\Omega, \\[5 pt]
1-\dfrac{\beta d(x)}{\eps(1+\beta h(x))} &\text{if }x\in\Sigma_\eps,\\[5 pt]
v(x) &\text{if }x\notin\Omega_\eps,
\end{dcases}\]
where we recall that, if $x=\sigma+t\nu(\sigma)$, then $h(x)=h(\sigma)$. Trivially $v_\eps$ converges to $v$ in $L^2(\R^n)$ and $v_\eps\in K_\eps$. For every $x\in\Sigma_\eps$,
\[\nabla v_\eps(x)=-\dfrac{\beta \nabla d(x)}{\eps(1+\beta h(x))}+\dfrac{\beta^2 d(x)\nabla h(x)}{\eps(1+\beta h(x))^2}.\]
Recalling that $0\le d\le \eps h$, $\nabla d=\nu$ and $\nabla h\cdot \nu=0$, we have
\[\abs{\nabla v_\eps}^2=\dfrac{\beta^2 }{\eps^2(1+\beta h)^2}+\dfrac{\beta^4 d^2 \abs{\nabla h}^2}{\eps^2(1+\beta h)^4}\le \dfrac{\beta^2 }{\eps^2(1+\beta h)^2}+ \dfrac{\beta^4 h^2 \abs{\nabla h}^2}{(1+\beta h)^4},\]
where the second term is bounded since $h$ is Lipschitz. Hence, substituting $\eps h \tau=t$ in \eqref{eq:intsigma0}, we get 
\[\begin{split}\eps\int_{\Sigma_\eps}\abs{\nabla v_\eps}^2\,dx &\le \dfrac{\beta^2 }{\eps}\int_{\Sigma_\eps}\dfrac{1}{(1+\beta h)^2}\,dx+\eps C\abs{\Sigma_\eps}\\[10 pt]
&\le\int_{\partial\Omega}\int_0^1 \dfrac{\beta^2 h}{(1+\beta h)^2}(1+\eps Q_0)\,d\tau\,d\Hn+\eps C\abs{\Sigma_\eps}\\[10 pt]
&=\int_{\partial\Omega}\dfrac{\beta^2 h}{(1+\beta h)^2}\,d\Hn+o(\eps).
\end{split}\]
On the other hand, for every $\sigma\in\partial\Omega$,
\[v_\eps(\sigma+\eps h(\sigma)\nu(\sigma))=\dfrac{1}{1+\beta h(\sigma)},\]
from which we get
\[\beta\int_{\partial\Omega_\eps} v_\eps^2\,d\Hn\le \int_{\partial\Omega}\dfrac{\beta}{(1+\beta h)^2}(1+\eps Q_0)\,d\Hn=\int_{\partial\Omega}\dfrac{\beta}{(1+\beta h)^2}\,d\Hn+o(\eps).\]
Hence we have
\[\mathcal{F}_\eps(v_\eps,h)\le\beta \int_{\partial\Omega}\dfrac{1}{1+\beta h}\,d\Hn+o(\eps),\]
so that
\[\limsup_{\eps\to0^+}\mathcal{F}_\eps(v_\eps,h)\le\beta \int_{\partial\Omega}\dfrac{1}{1+\beta h}\,d\Hn\]
and the $\Gamma$-limsup inequality is proved.
\end{proof}
In the following, for simplicity, we will denote by
\[
\mathcal{F}_0(h)=\beta\int_{\partial\Omega}\frac{1}{1+\beta h}\,d\Hn.
\]
It can be deduced from the more general results in \cite{depiniscatro} that the minimum in the class of functions $h$ with a given mass of such functional is achieved when $h$ is constant. We include a direct proof of this statement in the following
\begin{prop}\label{prop: minimosenzaeps}
Let $\Omega$ be a bounded open set with Lipschitz boundary, let $P(\Omega)=P$, and let $m>0$. Then the problem
\begin{equation}\label{eq:problema00}\min\Set{\mathcal{F}_0(h)| h\in\mathcal{H}_m}\end{equation}
admits 
\[h_0=\dfrac{m}{P}\]
as the unique solution.
\end{prop}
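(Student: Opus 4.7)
The plan is to exploit the strict convexity of the scalar function $t\mapsto 1/(1+\beta t)$ on $[0,+\infty)$ and then use the monotonicity of the resulting lower bound in the average of $h$.

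First, I would normalize the surface measure on $\partial\Omega$ to a probability measure $d\mu=d\Hn/P$ and apply Jensen's inequality. Since $t\mapsto 1/(1+\beta t)$ is strictly convex, for any $h\in\mathcal{H}_m$ one gets
\[
\frac{1}{P}\mathcal{F}_0(h)=\frac{1}{P}\int_{\partial\Omega}\frac{\beta}{1+\beta h}\,d\Hn\ge\frac{\beta}{1+\beta\,\overline{h}},\qquad \overline{h}:=\frac{1}{P}\int_{\partial\Omega}h\,d\Hn,
\]
with equality if and only if $h$ is constant $\Hn$-almost everywhere on $\partial\Omega$.

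Next, I would use the constraint $\int_{\partial\Omega}h\,d\Hn\le m$, i.e. $\overline{h}\le m/P$. Since the function $s\mapsto\beta P/(1+\beta s)$ is strictly decreasing, this yields
\[
\mathcal{F}_0(h)\ge\frac{\beta P}{1+\beta\,\overline{h}}\ge\frac{\beta P}{1+\beta m/P}=\mathcal{F}_0(h_0),
\]
which shows that $h_0=m/P$ is a minimizer.

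For uniqueness, I would trace the two chains of inequalities. Equality in the Jensen step forces $h$ to be $\Hn$-a.e.\ constant, say $h\equiv c$, and then equality in the monotonicity step forces $c=m/P$ (note that for any $c<m/P$ strict inequality holds in the second step). Hence $h_0$ is the unique minimizer in $\mathcal{H}_m$. There is no serious obstacle here: the proof is a one-variable convexity argument combined with the observation that the functional is decreasing in the total mass of $h$, so the volume constraint is saturated at the minimum.
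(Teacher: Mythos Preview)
Your proof is correct. The paper's argument reaches the same lower bound $\beta P^2/(P+\beta m)$ but via the Cauchy--Schwarz (H\"older) inequality: it writes
\[
P=\int_{\partial\Omega}1\,d\Hn\le\left(\int_{\partial\Omega}\frac{1}{1+\beta h}\,d\Hn\right)^{1/2}\left(\int_{\partial\Omega}(1+\beta h)\,d\Hn\right)^{1/2}
\]
and then bounds the second factor by $(P+\beta m)^{1/2}$, while you normalize to a probability measure and invoke Jensen's inequality for the strictly convex map $t\mapsto 1/(1+\beta t)$, followed by the monotonicity of $s\mapsto \beta P/(1+\beta s)$ to pass from $\overline h$ to $m/P$. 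Your route is arguably more transparent for uniqueness, since the equality cases of Jensen and of the monotonicity step are read off directly, whereas the paper appeals separately to strict convexity of the integrand to rule out a second minimizer. Both arguments are equally elementary and yield the identical value $\mathcal{F}_0(h_0)=\beta P^2/(P+\beta m)$.
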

\begin{proof}
Let $h\in\mathcal{H}_m$. By Holder's inequality, we have that
\[\begin{split} P&=\int_{\partial\Omega}\,d\Hn\le\left(\int_{\partial\Omega}\dfrac{1}{1+\beta h}\,d\Hn\right)^{1/2}\left(\int_{\partial\Omega}(1+\beta h)\,d\Hn\right)^{1/2}\\[10 pt]
&\le\left(\int_{\partial\Omega}\dfrac{1}{1+\beta h}\,d\Hn\right)^{1/2}\left(P+\beta m\right)^{1/2},
\end{split}\]
so that
\[\mathcal{F}_0(h)\ge\dfrac{\beta P^2}{P+\beta m}=\mathcal{F}_0(h_0).\]
Finally, the uniqueness of the solution is given by the strict convexity of the function
\[x\longmapsto \dfrac{1}{1+\beta x}\]
for $x\ge0$.
\end{proof}\medskip 

Let $H$ be the mean curvature of $\Omega$, we aim to show that 
\[
\delta\mathcal{F}_\eps(\cdot, h) =\dfrac{\mathcal{F}_\eps(\cdot,h)-\mathcal{F}_0(h)}{\eps}
\]
$\Gamma$-converges, in the strong $L^2(\R^n)$ topology, to 
\[\mathcal{F}^{(1)}(v,h)=\begin{cases}\displaystyle \beta\int_{\partial\Omega} \dfrac{ Hh(2+\beta h)}{2(1+\beta h)^2}\,d\Hn &\text{if }v\in K_0, \\[10 pt]
+\infty &\text{if }v\in L^2(\R^n)\setminus K_0,  \end{cases}\]
where $K_0$ is the set defined in \eqref{def: K0}.

\subsection{Proof of \autoref{teorema1}}
\label{section: Gamma}
Let $\Omega$ be a bounded, open set with $C^3$ boundary, and fix a positive $C^2$ function $h\colon\partial\Omega\to\R$. In this section, we study the $\Gamma$-convergence of the family of functionals
\begin{equation}\label{Gesp}\delta\mathcal{F}_\eps(v)=\dfrac{\mathcal{F}_\eps(\cdot,h)-\mathcal{F}_0(h)}{\eps},\end{equation}
and we prove \autoref{teorema1}. In the following we  consider the functions $h, H\colon\partial\Omega\to\R$ extended on the set $\Sigma_\eps$ as $h(\sigma+t\nu)=h(\sigma)$ and $H(\sigma+t\nu)=H(\sigma)$. \medskip

For every $\eps>0$ let $u_\eps\in K_\eps$ be the minimizer to $\mathcal{F}_\eps$, where $K_\eps$ is defined in \eqref{def: Keps}. By the assumptions on $\Omega$ and $h$, we have that $u_\eps$ is a $C^2(\Sigma_\eps)$ function and it is a solution to
  \begin{equation}\label{elu}\begin{cases}
 -\Delta u_\eps = 0 &\text{in }\Sigma_\eps,\\[5 pt]
 u_\eps=1 &\text{on }\partial\Omega,\\[5 pt]
 \eps\dfrac{\partial u_\eps}{\partial\nu_\eps}+\beta u_\eps= 0 &\text{on }\partial\Omega_\eps.
 \end{cases}\end{equation}
Let $\alpha\in(0,\alpha_0)$, where
\[
\alpha_0=1-\max_{\sigma\in\partial\Omega}\frac{\beta h(\sigma)}{1+\beta h(\sigma)},
\]
and let $\eps>0$, then on $\Sigma_\eps$ we define
 \[
w_{\eps,\alpha}(x)=\begin{dcases}
1-\left(\frac{d(x)}{\eps h(x)}\right)^{1-\alpha}\frac{\beta h(x)}{(1-\alpha)(1+\beta h(x))} &\text{if }H(x)\ge 0,\\[7 pt]
1-\left(\frac{d(x)}{\eps h(x)}\right)^{1+\alpha}\frac{\beta h(x)}{(1+\alpha)(1+\beta h(x))} &\text{if }H(x)< 0. 
\end{dcases}
\]
Then we have that $w_{\eps,\alpha}>0$ and the following holds
 \begin{prop}\label{prop:subsol}
 For every $\alpha\in(0,\alpha_0)$ there exists $\eps_\alpha>0$ such that if $0<\eps<\eps_\alpha$, then 
 \[
 H(x) u_\eps(x) \ge H(x) w_{\eps,\alpha}(x) \qquad \text{for }x\in\Sigma_\eps.
 \]
 \end{prop}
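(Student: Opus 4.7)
The plan is to replace the piecewise definition of $w_{\eps,\alpha}$ by two functions defined on all of $\Sigma_\eps$,
\[
w^\pm_{\eps,\alpha}(x)=1-\frac{\beta h(x)}{(1\mp\alpha)(1+\beta h(x))}\Bigl(\frac{d(x)}{\eps h(x)}\Bigr)^{1\mp\alpha},
\]
and to prove that, for $\eps$ small, $w^+_{\eps,\alpha}$ is a subsolution and $w^-_{\eps,\alpha}$ a supersolution of the Robin problem \eqref{elu} throughout $\Sigma_\eps$. The maximum principle with Robin boundary condition then yields $w^+_{\eps,\alpha}\le u_\eps\le w^-_{\eps,\alpha}$ on all of $\Sigma_\eps$. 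Since the piecewise definition of $w_{\eps,\alpha}$ picks precisely $w^+_{\eps,\alpha}$ where $H\ge 0$ and $w^-_{\eps,\alpha}$ where $H<0$, multiplying the appropriate inequality by $H(x)$ produces the claim pointwise.

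For the sub/supersolution property in the interior I would compute $\Delta w^\pm_{\eps,\alpha}$ directly. Set $\phi=d/(\eps h)$ and $A^\pm=\beta h/\bigl((1\mp\alpha)(1+\beta h)\bigr)$, so that $w^\pm_{\eps,\alpha}=1-A^\pm\phi^{1\mp\alpha}$. Using $|\nabla d|=1$, $\nabla d=\nu$, $\nabla h\cdot\nu=0$ (from the normal extension), and $\Delta d=H+O(t)$ (cf.~\autoref{ossimp}), one finds $\nabla\phi=\nu/(\eps h)-d\,\nabla h/(\eps h^2)$ and $\Delta\phi=H/(\eps h)+O(1)$. Direct expansion then yields
\[
\Delta w^+_{\eps,\alpha}=\frac{A^+(1-\alpha)}{(\eps h)^2}\,\phi^{-\alpha-1}\bigl[\alpha-Hd\bigr]+O\bigl(\phi^{-\alpha}\bigr),
\]
and analogously $\Delta w^-_{\eps,\alpha}=-\dfrac{A^-(1+\alpha)}{(\eps h)^2}\,\phi^{\alpha-1}[\alpha+Hd]+O(\phi^\alpha)$, where the remainders depend on $\Omega$, $h$, $\alpha$ but not on $\eps$. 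Since $|Hd|\le\eps\|H\|_\infty\|h\|_\infty\to 0$, both bracketed factors stay above $\alpha/2$; and because $\phi\le 1$ gives $\phi^{-\alpha-1},\phi^{\alpha-1}\ge 1$, the $O(\eps^{-2})$ leading term dominates the $\eps$-independent remainder uniformly. Hence $\Delta w^+_{\eps,\alpha}\ge 0$ and $\Delta w^-_{\eps,\alpha}\le 0$ throughout $\Sigma_\eps$ once $\eps$ is small enough.

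Both $w^\pm_{\eps,\alpha}$ equal $1=u_\eps$ on $\partial\Omega$. On the outer boundary $\phi=1$, a direct computation combined with the first-order expansion $\nu_\eps=\nu-\eps\nabla_\tau h+O(\eps^2)$ of the outer unit normal to $\partial\Omega_\eps$ gives
\[
\eps\,\frac{\partial w^\pm_{\eps,\alpha}}{\partial\nu_\eps}+\beta w^\pm_{\eps,\alpha}=\mp\frac{\alpha\beta^2 h}{(1\mp\alpha)(1+\beta h)}+O(\eps),
\]
strictly negative for $w^+_{\eps,\alpha}$ and strictly positive for $w^-_{\eps,\alpha}$ when $\eps$ is small. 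Setting $v=u_\eps-w^+_{\eps,\alpha}$, one has $\Delta v=-\Delta w^+_{\eps,\alpha}\le 0$ strictly, $v=0$ on $\partial\Omega$, and $\eps\,\partial_{\nu_\eps}v+\beta v>0$ on $\partial\Omega_\eps\setminus\partial\Omega$. The strong maximum principle rules out any interior negative minimum ($v$ would then be constant, contradicting $\Delta v<0$). At an outer-boundary minimum point Hopf's lemma yields $\partial_{\nu_\eps}v<0$, which together with the Robin inequality forces $v>0$ there, so $v\ge 0$, i.e.\ $u_\eps\ge w^+_{\eps,\alpha}$; the dual argument applied to $w^-_{\eps,\alpha}-u_\eps$ gives $u_\eps\le w^-_{\eps,\alpha}$. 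The main technical obstacle is the careful bookkeeping of the $O(\eps)$ corrections arising from the $O(t)$ term in $\Delta d$ and the deviation of $\nu_\eps$ from $\nu$; the constraint $\alpha<\alpha_0$ provides the uniform margin that lets the leading $1/\eps^2$ and $\alpha$-terms beat them as $\eps\to 0^+$.
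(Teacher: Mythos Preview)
Your approach is essentially the same as the paper's. Both of you replace the piecewise $w_{\eps,\alpha}$ by the two global functions $v_\gamma=1-(d/\eps h)^\gamma\,\beta h/(\gamma(1+\beta h))$ with $\gamma=1\mp\alpha$ (your $w^\pm_{\eps,\alpha}$), verify by direct computation of the Laplacian and of the Robin boundary operator that for small $\eps$ the function $v_{1-\alpha}$ is a strict subsolution and $v_{1+\alpha}$ a strict supersolution of \eqref{elu} on all of $\Sigma_\eps$, and then invoke elliptic comparison to sandwich $u_\eps$ between them; multiplying by $H$ then gives the claim. The paper normalizes the interior computation by writing $(d/h)^{2-\gamma}\eps^\gamma\Delta v_\gamma=\beta(1-\gamma)/(h(1+\beta h))+O(\eps)$ and closes with ``standard comparison results''; you keep the $\phi^{-\alpha-1}/(\eps h)^2$ prefactor explicit and spell out the strong maximum principle / Hopf lemma argument. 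These are cosmetic differences only.

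Two small corrections. First, the remainder you label $O(\phi^{\mp\alpha})$ with $\eps$-independent constant is not quite how the terms sort: the $O(\eps^2)$ correction in $|\nabla\phi|^2=(\eps h)^{-2}(1+O(\eps^2))$ produces a contribution of the \emph{same} $\phi$-order as the leading one, so it belongs inside the bracket as $[\alpha\mp Hd+O(\eps^2)]$ rather than in the external remainder. This is harmless, since your argument that the bracket stays above $\alpha/2$ applies equally well to the corrected bracket. Second, the constraint $\alpha<\alpha_0$ is not what supplies the margin in the sub/supersolution estimates --- that comes from $\alpha>0$. The upper bound $\alpha<\alpha_0$ is imposed in the paper only to ensure $w_{\eps,\alpha}>0$ (equivalently $\beta h/((1-\alpha)(1+\beta h))<1$), which plays no role in the comparison argument itself.
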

 \begin{proof}
 Fix $\alpha\in(0,\alpha_0)$. For simplicity, we denote by
 \[
 v_\gamma:=1-\left(\frac{d(x)}{\eps h(x)}\right)^{\gamma}\frac{\beta h(x)}{\gamma(1+\beta h(x))},
 \]
 and we aim to show that there exists an $\eps_\alpha>0$ such that for any $0<\eps<\eps_\alpha$, we have that $v_{1-\alpha}$ is a subsolution to \eqref{elu}, while $v_{1+\alpha}$ is a supersolution to the same problem. Namely,
 \begin{equation}
 \label{subsol}
     \begin{cases}
     -\Delta v_{1-\alpha}\le 0 &\text{in }\Sigma_\eps,\\[5 pt]
     v_{1-\alpha}=1 &\text{on }\partial\Omega,\\[5 pt]
     \eps\dfrac{\partial v_{1-\alpha}}{\partial\nu_\eps}+\beta v_{1-\alpha}\le 0 &\text{on }\partial\Omega_\eps,
     \end{cases} \qquad \qquad
     \begin{cases}
     -\Delta v_{1+\alpha}\ge 0 &\text{in }\Sigma_\eps,\\[5 pt]
     v_{1+\alpha}=1 &\text{on }\partial\Omega,\\[5 pt]
     \eps\dfrac{\partial v_{1+\alpha}}{\partial\nu_\eps}+\beta v_{1+\alpha}\ge 0 &\text{on }\partial\Omega_\eps.
     \end{cases}
 \end{equation}
 In the following, we will always assume that $\eps<1$. Let us recall that
\[
\Omega_\eps=\Set{x\in\R^n | \dfrac{d(x)}{h(x)}\le\eps},\qquad \partial\Omega_\eps=\Set{x\in\R^n | d(x)-\eps h(x)=0}.
\]
By standard computations we get
\[
\nabla\left(\frac{d}{h}\right)=\frac{\nabla d}{h}-\frac{d\nabla h}{h^2}, \qquad \abs*{\nabla\left(\frac{d}{h}\right)}=\frac{1}{h}\sqrt{1+\left(\frac{d}{h}\right)^2\abs{\nabla h}^2}.
\]
Then, recalling that $\nabla d=\nu$ and that $\nabla h\cdot \nu=0$, the normal $\nu_\eps$ to the set $\Omega_\eps$ is given by
\[
\nu_\eps=\frac{1}{\sqrt{1+\eps^2\abs{\nabla h}^2}}(\nu-\eps\nabla h).
\]
By direct computations, for any $\gamma\in(0,2)\setminus\{1\}$ we have
\begin{equation}
\label{eq: wlap}
\Delta v_{\gamma} =-\frac{\beta h}{\gamma \eps^\gamma (1+\beta h)}\Delta\!\left[\left(\frac{d}{h}\right)^\gamma\right]-\frac{2}{\gamma\eps^\gamma}\nabla\!\left[\left(\frac{d}{h}\right)^\gamma\right]\cdot\nabla\!\left[\frac{\beta h}{1+\beta h}\right]-\frac{1}{\gamma\eps^\gamma}\left(\frac{d}{h}\right)^\gamma\Delta \!\left[\frac{\beta h}{1+\beta h}\right]. 
\end{equation}
We then compute
\begin{equation}
\label{eq: gradients}
\nabla\!\left[\left(\frac{d}{h}\right)^\gamma\right]=\gamma\left(\frac{d}{h}\right)^{\gamma-1}\left(\frac{\nu}{h}-\frac{d\nabla h}{h^2}\right), \qquad \nabla\!\left[\frac{\beta h}{1+\beta h}\right]=\frac{\beta\nabla h}{(1+\beta h)^2},
\end{equation}
from which we get
\begin{equation}
\label{eq: scalarproductw}
\nabla\!\left[\left(\frac{d}{h}\right)^\gamma\right]\cdot\nabla\!\left[\frac{\beta h}{1+\beta h}\right]=-\gamma\left(\frac{d}{h}\right)^\gamma\frac{\beta\abs{\nabla h}^2}{h(1+\beta h)^2}.
\end{equation}
In addition, we have
\begin{equation}
\label{eq: lapdh}
\begin{split}
\Delta\!\left[\left(\frac{d}{h}\right)^\gamma\right]&=\gamma(\gamma-1)\left(\frac{d}{h}\right)^{\gamma-2}\abs*{\nabla\left(\frac{d}{h}\right)}^2+\gamma\left(\frac{d}{h}\right)^{\gamma-1}\left(\frac{\Delta d}{h}-\frac{d\Delta h}{h^2}+2\frac{d\abs{\nabla h}^2}{h^3}\right) \\[10 pt]
&=\gamma\left(\frac{d}{h}\right)^{\gamma-2}\left(-\frac{1-\gamma}{h^2}-\left(\frac{d}{h}\right)^2\frac{(1-\gamma)\abs{\nabla h}^2}{h^2}+\frac{d}{h}\frac{\Delta d}{h}-\left(\frac{d}{h}\right)^2\frac{\Delta h}{h}+\left(\frac{d}{h}\right)^2\frac{2\abs{\nabla h}^2}{h^2}\right),
\end{split}
\end{equation}
so that, by \eqref{eq: wlap}, \eqref{eq: lapdh}, and \eqref{eq: scalarproductw}, we get 
\begin{equation}
\label{eq: lapwestim}
\left(\frac{d(x)}{h(x)}\right)^{2-\gamma}\eps^\gamma\Delta v_{\gamma}(x)=\frac{\beta(1-\gamma)}{h(x)(1+\beta h(x))}+R_1(x,\eps,\gamma),
\end{equation}
where $R_1(x,\eps,\gamma)$ is a suitable remainder term. Since $d\le \eps h$, 
\[
0<\inf_{\Sigma_\eps} h\le \sup_{\Sigma_\eps} h<+\infty,
\]
and $\abs{\nabla h}$, $\Delta h$, $\Delta d$ are bounded, then there exist $C_\gamma,\eps_0>0$ such that
\begin{equation}
\label{eq: estimrem}
\abs{ R_1(x,\eps,\gamma) }\le C_\gamma\eps
\end{equation}
for any $\eps<\eps_0$.
Thus, using \eqref{eq: estimrem} in \eqref{eq: lapwestim} we have that there exists $\eps_\alpha>0$ such that if $0<\eps<\eps_\alpha$, then 
\begin{equation}
\label{eq: finallapest}
-\Delta v_{1-\alpha}<0, \qquad -\Delta v_{1+\alpha} >0.
\end{equation}

\medskip
On the other hand, for every $x\in\partial\Omega_\eps$, since $d(x)=\eps h(x)$ and \eqref{eq: gradients} hold true, we get
\[
\nabla v_{\gamma}(x) =-\frac{\beta}{\eps(1+\beta h(x))}(\nu(x)-\eps\nabla h(x))-\frac{\beta \nabla h(x)}{\gamma(1+\beta h(x))^2},
\]
which yields
\begin{equation}\label{normder}
\begin{split}
\frac{\partial v_{\gamma}}{\partial \nu_\eps}(x)=&-\frac{\beta\sqrt{1+\eps^2\abs{\nabla h}^2}}{\eps(1+\beta h)}-\frac{\beta}{\gamma}\nu_\eps\cdot\frac{\nabla h}{(1+\beta h)^2} \\[7 pt]
=&-\frac{\beta\sqrt{1+\eps^2\abs{\nabla h}^2}}{\eps(1+\beta h)}+\frac{\beta \eps\abs{\nabla h}^2}{\gamma(1+\beta h)^2\sqrt{1+\eps^2\abs{\nabla h}^2}},
\end{split}
\end{equation}
while
\begin{equation}\label{boundaryv}
v_{\gamma}(x)=1-\frac{\beta h(x)}{\gamma(1+\beta h(x))}.
\end{equation}
Hence, we get by \eqref{normder} and \eqref{boundaryv}
\[
\eps\frac{\partial v_{\gamma}}{\partial \nu_\eps}+\beta v_{\gamma} =-(1-\gamma)\frac{\beta^2 h}{\gamma(1+\beta h)}+R_2(\sigma,\eps,\gamma)\quad\text{on }\partial\Omega_\eps,
\]
where, as before, up to choosing a smaller $\eps_0$,
\[
\abs{R_2(\sigma,\eps,\gamma)}\le C_\gamma \eps.
\]
Again, for small enough $\eps$, on $\partial\Omega_\eps$ we get
\begin{equation}\label{robinw}
\eps\frac{\partial v_{1-\alpha}}{\partial \nu_\eps}+\beta v_{1-\alpha}<0,\quad\qquad \eps\frac{\partial v_{1+\alpha}}{\partial \nu_\eps}+\beta v_{1+\alpha}>0.
\end{equation}

Finally, joining \eqref{eq: finallapest} and \eqref{robinw}, by standard comparison results for elliptic operators the proposition is proved.
 \end{proof}
 
 We can now prove \autoref{teorema1}.
 \begin{proof}[Proof of \autoref{teorema1}]
 We start by proving the $\Gamma$-liminf inequality: 
without loss of generality, we can prove the inequality for the sequence of minimizers $u_\eps$. Here we recall the definitions of $\mathcal{F}_\eps$ and $\mathcal{F}_0$, omitting the dependence on $h$.
 \begin{equation}
 \label{eq: defFeps}
 \mathcal{F}_\eps(u_\eps)=\eps\int_{\Sigma_\eps} \abs{\nabla u_\eps}^2\,dx+\beta\int_{\partial\Omega_\eps} u_\eps^2\,d\Hn,
 \end{equation}
 \begin{equation}
  \label{eq: defF0}
 \mathcal{F}_0=\beta\int_{\partial\Omega} \dfrac{1}{1+\beta h}\,d\Hn.
 \end{equation} 
 By \eqref{eq:intsigma} and \eqref{eq:intdesigma} we have \begin{equation}\label{cov1}\int_{\Sigma_\eps}\abs{\nabla u_\eps}^2\,dx\ge\int_{\partial\Omega}\int_0^{\eps h(\sigma)} \abs{\nabla u_\eps(\sigma+t\nu)}^2\left(1+tH(\sigma)-\eps^2 Q\right)\,dt\,d\Hn\end{equation}
and
\begin{equation}\label{cov2}\dfrac{\beta}{\eps}\int_{\partial\Omega_\eps} u_\eps^2\,\Hn\ge\dfrac{\beta}{\eps}\int_{\partial\Omega} u_\eps^2(\sigma+\eps h(\sigma)\nu(\sigma))\left(1+\eps h(\sigma)H(\sigma)-\eps^2 Q\right)\,d\Hn.\end{equation}
For $\eps$ sufficiently small, for every $\sigma\in\partial\Omega$, and $0<t<\eps h(\sigma)$, we have that $1+tH(\sigma)>0$, so that, using Holder's inequality and integrating by parts,
\[\begin{split}\int_0^{\eps h(\sigma)} \abs{\nabla u_\eps(\sigma+t\nu)}^2\left(1+tH(\sigma)\right)\,dt&\ge\dfrac{1}{\eps h}\left(\int_0^{\eps h(\sigma)} \abs{\nabla u_\eps(\sigma+t\nu)}\sqrt{1+tH}\,dt\right)^2\\[10pt]
&\ge\dfrac{1}{\eps h}\left(\int_0^{\eps h(\sigma)}\dfrac{d}{dt}( u_\eps(\sigma+t\nu))\sqrt{1+tH}\,dt\right)^2\\[10 pt]
&\ge\frac{1}{\eps h}\left(u_\eps(\sigma+\eps h\nu)\sqrt{1+\eps h H}-\left(1+\displaystyle\int_0^{\eps h(\sigma)}\dfrac{H u_\eps(\sigma+t\nu)}{2\sqrt{1+tH}}\,dt\right)\right)^2.
\end{split}\]
Up to choosing a smaller $\eps$, we can apply Young's inequality, having that for every $\lambda>0$
\begin{equation}
\label{eq: lowergrad}
\begin{split}\int_0^{\eps h(\sigma)} \abs{\nabla u_\eps^2(\sigma+t\nu)}\left(1+tH(\sigma)\right)\,dt\ge& \dfrac{(1-\lambda)(1+\eps h H)u_\eps(\sigma+\eps h\nu)^2}{\eps h}\\[10 pt]&+\dfrac{1}{\eps h}\left(1-\dfrac{1}{\lambda}\right)\left(1+\int_0^{\eps h(\sigma)}\dfrac{H u_\eps(\sigma+t\nu)}{2\sqrt{1+tH}}\,dt\right)^2. \end{split}
\end{equation}
We then have, joining \eqref{eq: defFeps},\eqref{cov1}, \eqref{eq: lowergrad}, \eqref{cov2}, and \eqref{eq: defF0},
\begin{equation}\label{4asterischi}\begin{split}\delta\mathcal{F}_\eps(u_\eps)=\dfrac{\mathcal{F}_\eps(u_\eps)-\mathcal{F}_0}{\eps}\ge&\int_{\partial\Omega}\dfrac{1}{\eps h(\sigma)}\left((1-\lambda)(1+\eps hH)+\beta h(1+\eps h H)\right)u_\eps^2(\sigma+\eps h\nu)\,d\Hn\\[10 pt]
&+\int_{\partial\Omega}\dfrac{1}{\eps h}\left(\left(1-\dfrac{1}{\lambda}\right)\left(1+\int_0^{\eps h}\dfrac{H u_\eps(\sigma+t\nu)}{2\sqrt{1+tH}}\,dt\right)^2-\dfrac{\beta h}{1+\beta h}\right)\,d\Hn\\[10 pt]
&-Q\eps R(\eps,u_\eps)\end{split}\end{equation}
where, if $\eps$ is small enough, 
\[\begin{split}R(\eps,u_\eps)&=\eps\int_{\partial\Omega}\int_0^{\eps h(\sigma)} \abs{\nabla u_\eps(\sigma+t\nu)}^2\,d\Hn+\beta\int_{\partial\Omega} u_\eps(\sigma+\eps h(\sigma)\nu(\sigma))^2\,d\Hn\\[10 pt]
&\le 2\mathcal{F}_\eps(u_\eps).
\end{split}\]
Letting $\lambda=\lambda(\sigma)=1+\beta h(\sigma)$ in \eqref{4asterischi}, and using the inequality $(1+x)^2\ge1+2x$,
\[
    \delta\mathcal{F}_\eps(u_\eps)\ge\int_{\partial\Omega}\dfrac{\beta h H}{\eps (1+\beta h)}\int_0^{\eps }\dfrac{u_\eps(\sigma+th\nu)}{\sqrt{1+thH}}\,dt\,d\Hn+O(\eps).
\]
Moreover, for every $t\in(0,\eps)$ we have that $(1+thH)^{-1/2}=1+O(\eps)$, so that
\begin{equation}\label{stimaliminf}\delta\mathcal{F}_\eps(u_\eps)\ge\beta \int_{\partial\Omega}\dfrac{ h H}{(1+\beta h)}\fint_0^{\eps }u_\eps(\sigma+th\nu)\,dt\,d\Hn+O(\eps).\end{equation}
Finally, let $\alpha\in(0,1)$ and let 
\[\gamma=\gamma(\sigma)=\begin{cases} 1-\alpha &\text{if }H(\sigma)\ge0\\[5 pt] 1+\alpha &\text{if }H(\sigma)<0.
\end{cases}
\] Let us recall that
\[
w_{\eps,\alpha}(\sigma+th(\sigma)\nu(\sigma))=1-t^\gamma\frac{\beta h(\sigma)}{\eps^\gamma \gamma(1+\beta h(\sigma))}.
\]
By \autoref{prop:subsol} we have that for every $0<\eps<\eps_\alpha$
\[
\begin{split}
\delta\mathcal{F}_\eps(u_\eps)&\ge\beta \int_{\partial\Omega}\dfrac{ h H}{(1+\beta h)}\fint_0^{\eps }w_{\eps,\alpha}(\sigma+th\nu)\,dt\,d\Hn+O(\eps)\\[10 pt]
&=\int_{\partial\Omega}\left(1-\frac{\beta h}{(1+\beta h)\gamma(\gamma+1)}\right)\frac{\beta h H}{1+\beta h}\,d\Hn+O(\eps),
\end{split}
\]
so that
\[\liminf_{\eps\to0^+}\delta\mathcal{F}_\eps(u_\eps)\ge\int_{\partial\Omega}\left(1-\frac{\beta h}{(1+\beta h)\gamma(\gamma+1)}\right)\frac{\beta h H}{1+\beta h}\,d\Hn.\]
Letting $\alpha$ go to $0$, we have that $\gamma$ tends to $1$, and
\[\liminf_{\eps\to0^+}\delta\mathcal{F}_\eps(u_\eps)\ge\beta\int_{\partial\Omega}\dfrac{hH(2+\beta h)}{2(1+\beta h)^2}\,d\Hn,\]
and the $\Gamma$-Liminf is proved.\medskip

We now prove the $\Gamma$-limsup inequality:

Let
\[
\varphi_\eps(x)=\begin{dcases}1 &\text{if }x\in\Omega, \\[5 pt]
1-\dfrac{\beta d(x)}{\eps(1+\beta h(x))}-\frac{\beta d(x)^2 H(x)}{2\eps(1+\beta h(x))^2} &\text{if }x\in\Sigma_\eps,\\[5 pt]
0 &\text{if }x\in\R^n\setminus\Omega,
\end{dcases}\]
where we recall that if $x=\sigma+t\nu(\sigma)$, then $h(x)=h(\sigma)$ and $H(x)=H(\sigma)$. 
We have that $\varphi_\eps\in H^1(\Omega)$ and $\varphi_\eps$ converges in $L^2(\R^n)$, to the characteristic function of $\Omega$. Computing the gradient of $\varphi_\eps$, for any $x\in\Sigma_\eps$,
\[
\nabla \varphi_\eps(x)=-\frac{\beta\nu(x)}{\eps(1+\beta h(x))}-\frac{\beta d(x)H(x)\nu(x)}{\eps(1+\beta h(x))^2}+R_\eps(x),
\]
where $R_\eps$ is a remainder term which is bounded, uniformly in $\eps$, since $d\le\eps h$, and $h,\nabla h, H, \nabla H$ are bounded. Moreover, $\nu\cdot R_\eps=0$, since both $h$ and $H$ are defined in such a way that $\nabla h$ and $\nabla H$ are orthogonal to $\nu$. Therefore, for sufficiently small $\eps$,
\[
\begin{split}
\abs{\nabla\varphi_\eps}^2&\le\frac{\beta^2}{\eps^2}\underbracket{\left(\dfrac{1}{(1+\beta h)^2}+2\dfrac{Hd}{(1+\beta h)^3}\right)}_{\ge 0}+C,
\end{split}
\]
where we used again the boundedness of $h$ and $H$, and the fact that $d\le \eps h$. Hence, substituting $\eps h\tau = t$ in \eqref{eq:intsigma}, and noticing that $d(\sigma+\eps \tau h(\sigma)\nu(\sigma))=\eps h \tau$, we get
\begin{equation}\label{stimagrad}
\begin{split}
    \eps\int_{\Sigma_\eps} \abs{\nabla \varphi_\eps}^2\,dx\le&\dfrac{\beta^2}{\eps}\int_{\Sigma_\eps}\left(\dfrac{1}{(1+\beta h)^2}+2\dfrac{Hd}{(1+\beta h)^3}\right)\,dx+\eps C\abs*{\Sigma_\eps}\\[10 pt]
    \le&\int_{\partial\Omega}\int_0^1\dfrac{\beta^2h}{(1+\beta h)^2}(1+\eps \tau h H+\eps^2 Q)\,d\tau\,d\Hn+\\[10 pt]
    &+\int_{\partial\Omega}\int_0^1\dfrac{2\eps\beta^2 h^2H \tau}{(1+\beta h)^3}(1+\eps \tau h H+\eps^2 Q)\,d\tau\,d\Hn
    +\eps C\abs*{\Sigma_\eps}\\[10 pt]
    =&\int_{\partial\Omega}\dfrac{\beta^2h}{(1+\beta h)^2}\,d\Hn+\eps\int_{\partial\Omega}\left(\dfrac{\beta^2h^2H}{2(1+\beta h)^2}+\dfrac{\beta^2 h^2H }{(1+\beta h)^3}\right)\,d\Hn+O(\eps^2).
\end{split}
\end{equation}
On the other hand, for every $\sigma\in\partial\Omega$,
\[
\varphi_\eps(\sigma+\eps h(\sigma)\nu(\sigma))=\frac{1}{1+\beta h(\sigma)}-\frac{\eps\beta h(\sigma)^2 H}{2(1+\beta h(\sigma))^2},
\]
from which we get
\begin{equation}\label{phi2}\begin{split}
\beta\int_{\partial{\Omega_\eps}} \varphi_\eps^2\,d\Hn\le\int_{\partial\Omega}\dfrac{\beta}{(1+\beta h)^2}(1+\eps h H)\,d\Hn
-\int_{\partial\Omega}\dfrac{\eps\beta^2 h^2 H}{(1+\beta h)^3}\,d\Hn + O(\eps^2).
\end{split}
\end{equation}
Finally, joining \eqref{eq: defFeps}, \eqref{stimagrad}, \eqref{phi2}, and \eqref{eq: defF0} we have
\[\begin{split}\delta\mathcal{F}_\eps(\varphi_\eps)=\dfrac{\mathcal{F}_\eps(u_\eps)-\mathcal{F}_0}{\eps}&\le\beta\int_{\partial\Omega}\left(\dfrac{\beta^2h^2H}{2(1+\beta h)^2}+\dfrac{\beta hH}{(1+\beta h)^2}\right)\,d\Hn +O(\eps)\\[10 pt]
&=\beta\int_{\partial\Omega}\dfrac{hH(2+\beta h)}{2(1+\beta h)^2}\,d\Hn +O(\eps)
\end{split}\]
so that
\[\limsup_{\eps\to0^+}\delta\mathcal{F}_\eps(\varphi_\eps)\le\beta\int_{\partial\Omega}\dfrac{h H(2+\beta h)}{2(1+\beta h)^2}\,d\Hn\]
and the $\Gamma$-limsup inequality is proved.
 \end{proof}
 
\section{Properties of the first order development}\label{shapeopt}
Let $\Omega$ be a bounded, open set with $C^{1,1}$ boundary. Consider the functional
\[\mathcal{G}_\eps(\Omega,h)=\beta\int_{\partial\Omega}\left(\dfrac{1}{1+\beta h}+\eps H\dfrac{h(2+\beta h)}{2(1+\beta h)^2}\right)\,d\Hn.\]
In the following, we drop the dependence on the set $\Omega$ and we write $\mathcal{G}_\eps(h)$ in place of $\mathcal{G}_\eps(\Omega,h)$.
For every $m>0$, we will consider the problem
\begin{equation}\label{problemm}\inf\Set{\mathcal{G}_\eps(h)|h\in \mathcal{H}_m},\end{equation}
where the set $\mathcal{H}_m$ is defined in \eqref{eq: Hm}.
\begin{oss}
Assume that a non-zero continuous solution $\mu\in L^1(\partial\Omega)$ to problem \eqref{problemm} exists, so that the set $U=\set{\mu>0}$ is open. Then for every $\psi\in C_c^\infty(U)$ with zero mean, and for every $\eta\in\R$ sufficiently small, we can consider the variation $\mu+\eta\psi$ which leads to the Euler-Lagrange equation
\[\int_{\partial\Omega}\left(-\dfrac{\beta}{(1+\beta \mu)^2}+\dfrac{\eps H}{1+\beta \mu}-\eps H \dfrac{\beta \mu(2+\beta \mu)}{(1+\beta \mu)^3}\right)\psi\,d\Hn=0.\]
The previous equation yields
\[\dfrac{c}{\beta}(1+\beta \mu)^3-(1+\beta\mu)+\dfrac{\eps H}{\beta} =0,\]
for some constant $c\in\R$.
\end{oss}

Let $\eps>0$, in the following we will assume that 
\begin{equation}\label{hpH}\sup_{\partial\Omega}\dfrac{\eps H}{\beta}\le\dfrac{2}{3}.\end{equation}
Let 
\[H_0=\inf_{\partial\Omega} H\]
and 
\[k_0=1-\dfrac{\eps H_0}{\beta}.\]
For every $k\in(0,k_0)$, let \[\Gamma_k=\Set{\sigma\in\partial\Omega|\dfrac{\eps H(\sigma)}{\beta}< 1-k}\] and consider 
\[P_k(y,\sigma)=ky^3-y+\dfrac{\eps H(\sigma)}{\beta}.\]

\begin{prop}\label{yk} 
Let \eqref{hpH} hold true. Then, for every $k\in(0,k_0)$, and $\sigma\in \Gamma_k$, in the interval $(1,+\infty)$ there exists a unique $y_k(\sigma)$ such that 
\begin{equation}\label{implicit} P_k(y_k(\sigma),\sigma)=0,\end{equation}
and there exists $z_k> 1$ such that
\begin{equation}
\label{ykestiates}
    \max\left\{\dfrac{1}{\sqrt{3k}},1\right\}\le y_k(\sigma)\le z_k.
\end{equation}
In particular, we have
\begin{equation}
\label{eq: limitzk}
\lim_{k\to k_0^-}z_k=1.
\end{equation}
Moreover, for every $k_1<k_2$ and $\sigma\in\Gamma_{k_2}$ we have that 
\begin{equation}\label{monotonicity} y_{k_2}(\sigma)<y_{k_1}(\sigma).\end{equation}
\end{prop}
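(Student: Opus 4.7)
My plan is to treat $P_k(\cdot, \sigma)$ as a cubic in $y$, use single-variable calculus for existence and uniqueness, and then rely on the implicit function theorem for the quantitative bounds and monotonicity. First I would compute $\partial_y P_k = 3ky^2 - 1$, so on $(0,+\infty)$ the polynomial has a unique critical point (a local minimum) at $y_* := 1/\sqrt{3k}$. Next I would observe that $P_k(1, \sigma) = k - 1 + \eps H(\sigma)/\beta < 0$ for $\sigma \in \Gamma_k$, and that $P_k \to +\infty$ as $y \to +\infty$, which by the intermediate value theorem yields a root $y_k(\sigma) > 1$. For uniqueness in $(1, +\infty)$ together with the lower bound $y_k(\sigma) \ge \max\{1, 1/\sqrt{3k}\}$, I would split into cases: if $k \ge 1/3$, then $y_* \le 1$ and $P_k(\cdot,\sigma)$ is strictly increasing on $(1,+\infty)$; if $k < 1/3$, then $y_* > 1$ and $P_k(\cdot,\sigma)$ is strictly decreasing on $(1, y_*)$ starting below zero, hence stays negative there, and the unique root lies in $(y_*, +\infty)$.

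For the upper bound I would view $y_k$ as a function of the parameter $a = \eps H(\sigma)/\beta$ and differentiate the equation $k y^3 - y + a = 0$ implicitly:
\begin{equation*}
\frac{\partial y_k}{\partial a} = -\frac{1}{3 k y_k^2 - 1} < 0
\end{equation*}
on the branch $y_k > 1/\sqrt{3k}$. Hence $y_k$ is strictly decreasing in $a$, and its supremum over $\Gamma_k$ is attained at the minimum value $a = 1 - k_0$, which is realised on $\partial\Omega$ by compactness and continuity of $H$. I would then define $z_k$ as the unique root in $(1, +\infty)$ of $k z^3 - z + 1 - k_0 = 0$; existence and uniqueness come from the same shape analysis as above, using $k < k_0$, and the bound $y_k(\sigma) \le z_k$ is then automatic.

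To establish $\lim_{k \to k_0^-} z_k = 1$, I would first rule out escape to $+\infty$: the identity $k z_k^2 = 1 + (k_0 - 1)/z_k$ would otherwise force $k z_k^2 \to 1$, i.e.\ $z_k \sim 1/\sqrt{k_0}$ finite, contradicting unboundedness. Any subsequential limit $z \ge 1$ then satisfies $k_0 z^3 - z + 1 - k_0 = 0$, which factors as $(z - 1)(k_0 z^2 + k_0 z + k_0 - 1) = 0$. The non-trivial positive root $(-1 + \sqrt{4/k_0 - 3})/2$ exceeds $1$ only when $k_0 < 1/3$, but hypothesis \eqref{hpH} yields $k_0 \ge 1/3$, so $z = 1$ is the unique accumulation point.

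Finally, for the monotonicity \eqref{monotonicity} I would differentiate $P_k(y_k, \sigma) = 0$ in $k$ at fixed $\sigma$:
\begin{equation*}
\frac{\partial y_k}{\partial k} = -\frac{y_k^3}{3 k y_k^2 - 1} < 0,
\end{equation*}
again on the branch $y_k > 1/\sqrt{3k}$, yielding $y_{k_2}(\sigma) < y_{k_1}(\sigma)$ whenever $k_1 < k_2$. The main obstacle is the limit $z_k \to 1$: both preventing escape to infinity and ruling out other accumulation points rely crucially on the threshold $k_0 \ge 1/3$ provided by \eqref{hpH}; the remaining steps amount to routine calculus on a cubic polynomial with a parameter.
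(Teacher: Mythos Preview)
Your argument is correct and complete. The overall structure matches the paper's proof for existence, uniqueness, and the lower bound $y_k(\sigma)\ge\max\{1,1/\sqrt{3k}\}$, but you take a different route for the upper bound, the limit \eqref{eq: limitzk}, and the monotonicity \eqref{monotonicity}.

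Where the paper argues by direct sign comparison---observing that $P_k(z_k,\sigma)\ge 0$ forces $y_k(\sigma)\le z_k$, and that $P_{k_1}(y_{k_2}(\sigma),\sigma)<P_{k_2}(y_{k_2}(\sigma),\sigma)=0$ forces $y_{k_2}(\sigma)<y_{k_1}(\sigma)$---you instead differentiate implicitly in the parameters $a=\eps H(\sigma)/\beta$ and $k$ and read off the sign of $\partial y_k/\partial a$ and $\partial y_k/\partial k$. Both approaches are equally short; the paper's is slightly more elementary (no regularity of the root as a function of the parameter is needed), while yours makes the quantitative dependence explicit. For the limit $z_k\to 1$, the paper first proves $z_k$ is monotone in $k$ and then passes to the limit in the defining equation; you bypass monotonicity with a boundedness plus subsequential-limit argument and the factorisation $(z-1)(k_0 z^2+k_0 z+k_0-1)=0$. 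Your observation that \eqref{hpH} gives $k_0\ge 1/3$ (the paper writes $k_0>1/3$, which is not quite what \eqref{hpH} yields) is in fact the sharp threshold needed.

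One small inaccuracy: in this section $\partial\Omega$ is only $C^{1,1}$, so $H$ is merely $L^\infty$ and the infimum $H_0$ need not be attained. Your sentence ``realised on $\partial\Omega$ by compactness and continuity of $H$'' is therefore not justified. This is harmless for the argument: you only need $\eps H(\sigma)/\beta\ge 1-k_0$ for every $\sigma$, together with the monotonicity of $y_k$ in $a$, to conclude $y_k(\sigma)\le z_k$; whether the bound is attained is irrelevant, and $z_k$ is well-defined as the root of $kz^3-z+(1-k_0)=0$ in $(1,\infty)$ simply because $k<k_0$.
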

\begin{proof}
For any fixed $\sigma\in\Gamma_k$ we have that $P_k(1,\sigma)<0$, and in addition, for $k\ge 1/3$, the polynomial $P_k(y,\sigma)$ is strictly increasing in $y\ge1$, while for $k<1/3$ we have that
\begin{align*}
&\frac{\partial}{\partial y}P_k(y,\sigma)<0 \qquad \text{if }y\in \left[1,\frac{1}{\sqrt{3k}}\right),\\[5 pt]
&\frac{\partial}{\partial y}P_k(y,\sigma)>0 \qquad \text{if }y\in \left(\frac{1}{\sqrt{3k}},+\infty\right).
\end{align*}
Therefore, in the interval $(1,+\infty)$ there exists a unique zero $y_k(\sigma)$ of the polynomial $P_k(\cdot,\sigma)$, and 
\[
y_k(\sigma)\ge\frac{1}{\sqrt{3k}}.
\]
Notice in addition that for every $y> 1$, we have that $y<y_k(\sigma)$ if and only if $P_k(y,\sigma)<0$. 
Hence, if we choose $z_k$ to be the unique real number in $(1,+\infty)$ such that
\begin{equation}
\label{eq: zkdef}
kz_k^3-z_k+\frac{\eps H_0}{\beta}=0,
\end{equation}
then
\[
P_k(z_k,\sigma)=\frac{\eps H(\sigma)}{\beta}-\frac{\eps H_0}{\beta}\ge 0,
\]
and we have that \eqref{ykestiates} holds. 

\medskip
We now prove \eqref{eq: limitzk}. We first observe that $z_k$ is decreasing in $k$: let $k_1<k_2$, so that 
\[k_1 z_{k_2}^3-z_{k_2}+\dfrac{\eps H_0}{\beta}<k_2 z_{k_2}^3-z_{k_2}+\dfrac{\eps H_0}{\beta}=0=k_1 z_{k_1}^3-z_{k_1}+\dfrac{\eps H_0}{\beta},\]
which ensures
\[z_{k_2}<z_{k_1},\] 
since the polynomial $k_1y^3-y+\eps H_0/\beta$ is strictly increasing in $[1/\sqrt{3k_1},+\infty)$. We now have that there exists
\[
z=\lim_{k\to k_0^-}z_k,
\]
and, passing to the limit in \eqref{eq: zkdef} and recalling that by definition $\beta k_0=\eps H_0$, we get that $z$ solves the equation 
\[
k_0(z^3-1)-z+1=0.
\]
From \eqref{hpH}, we have that $k_0>1/3$, so that $z=1$ is the unique solution in $[1,+\infty)$ to the previous equation, proving \eqref{eq: limitzk}.

\medskip
Finally, in order to prove \eqref{monotonicity}, let $k_1<k_2$ and $\sigma\in\Gamma_{k_1}\cap\Gamma_{k_2}=\Gamma_{k_2}$, then \eqref{implicit} ensures that
\[
P_{k_1}(y_{k_2},\sigma)< P_{k_2}(y_{k_2},\sigma)=0,
\]
from which
\[
y_{k_2}(\sigma)<y_{k_1}(\sigma).
\]
\end{proof}
Let $k\in(0,k_0)$, and let $y_k$ be as in \autoref{yk}, we define
\[\mu_k(\sigma)=\begin{cases}
\dfrac{1}{\beta}(y_k(\sigma)-1) & \text{if }\sigma\in\Gamma_k.\\[10 pt]
0 &\text{if }\sigma\in\partial\Omega\setminus\Gamma_k,
\end{cases}\]
Notice that, by \eqref{monotonicity}, $\mu_k$ is decreasing in $k$. 

We have the following
\begin{prop}
Let \eqref{hpH} hold true. Then, for every $m>0$ ,there exists a unique $k=k_m\in(0,k_0)$ such that
\[
\int_{\partial\Omega} \mu_k\,d\Hn=m.
\]

\end{prop}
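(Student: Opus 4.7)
The plan is to define the total-mass function
\[
\Phi(k)=\int_{\partial\Omega}\mu_k\,d\Hn,\qquad k\in(0,k_0),
\]
and prove that $\Phi$ is a strictly decreasing continuous bijection onto $(0,+\infty)$; the existence and uniqueness of $k_m$ then follows from the intermediate value theorem.

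First I would verify strict monotonicity. If $k_1<k_2$, then $\Gamma_{k_2}\subseteq\Gamma_{k_1}$ by the very definition of $\Gamma_k$, and on $\Gamma_{k_2}$ the bound \eqref{monotonicity} from \autoref{yk} gives $y_{k_2}(\sigma)<y_{k_1}(\sigma)$, hence $\mu_{k_2}<\mu_{k_1}$ pointwise on $\Gamma_{k_2}$, while $\mu_{k_2}\equiv 0$ on $\Gamma_{k_1}\setminus\Gamma_{k_2}$ and $\mu_{k_1}\ge 0$ there. Integrating, $\Phi(k_2)<\Phi(k_1)$. Continuity of $\Phi$ would come from applying the implicit function theorem to $P_k(y,\sigma)=0$: since $\partial_y P_k(y_k(\sigma),\sigma)=3ky_k(\sigma)^2-1>0$ for $y_k>1/\sqrt{3k}$ (which is the branch we selected), $y_k(\sigma)$ depends smoothly on $k$ on compact subsets of $(0,k_0)$, and the uniform bound $\mu_k\le (z_k-1)/\beta$ from \eqref{ykestiates} lets me pass to the limit under the integral via dominated convergence (together with continuity in $k$ of the indicator $\chi_{\Gamma_k}$ away from the level sets of $\eps H/\beta$, which are $\Hn$-negligible under mild regularity of $H$).

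Next I would compute the two endpoint limits. As $k\to k_0^-$, \eqref{eq: limitzk} gives $z_k\to 1$, so $0\le\mu_k\le (z_k-1)/\beta\to 0$ uniformly on $\partial\Omega$, yielding $\Phi(k)\to 0$. As $k\to 0^+$, the hypothesis \eqref{hpH} gives $\eps H/\beta\le 2/3$, so for every $k<1/3$ we have $\Gamma_k=\partial\Omega$, and then \eqref{ykestiates} forces $y_k(\sigma)\ge 1/\sqrt{3k}$ on all of $\partial\Omega$, whence
\[
\Phi(k)\ge \frac{P(\Omega)}{\beta}\left(\frac{1}{\sqrt{3k}}-1\right)\xrightarrow[k\to 0^+]{}+\infty.
\]

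Once these four properties are in hand, the IVT produces a value $k_m\in(0,k_0)$ with $\Phi(k_m)=m$, and strict monotonicity makes it unique. The main technical point I expect to require a little care is the continuity of $\Phi$ across the threshold set where $\eps H(\sigma)/\beta=1-k$, since the integrand $\mu_k$ exhibits a transition between the two branches of its definition there; this is resolved by noting that $y_k(\sigma)\to 1$, equivalently $\mu_k(\sigma)\to 0$, as $\sigma$ approaches $\partial\Gamma_k$ from inside, so that $\mu_k$ is continuous in $\sigma$ across that threshold and the dominated convergence argument goes through without issue.
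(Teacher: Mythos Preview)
Your proof is correct and follows essentially the same strategy as the paper: continuity of the mass function via the implicit function theorem together with dominated convergence, the endpoint limits $\Phi(k)\to 0$ as $k\to k_0^-$ (from \eqref{eq: limitzk}) and $\Phi(k)\to+\infty$ as $k\to 0^+$ (from the lower bound $y_k\ge 1/\sqrt{3k}$), and monotonicity for uniqueness. One small remark: your aside that the level sets $\{\eps H/\beta=1-k\}$ are $\Hn$-negligible ``under mild regularity of $H$'' is not generally true (think of a portion of $\partial\Omega$ with constant mean curvature), but this does no harm because---as you correctly note at the end---$\mu_k$ vanishes continuously at the threshold, which is exactly the mechanism the paper exploits when it splits the continuity argument into left and right limits.
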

\begin{proof}
We first prove that the function
\[
M(k)=\int_{\partial\Omega}\mu_k\,d\Hn
\]
is continuous.  Fix $k\in(0,k_0)$ and let $\delta>0$, then
\begin{equation}
\label{eq: limit+}
\beta(M(k)-M(k+\delta))=\int_{\Gamma_{k+\delta}}(y_{k}-y_{k+\delta})\,d\Hn+\int_{\Set{1-k-\delta\le \frac{\eps H}{\beta}< 1-{k}}}(y_{{k}}-1)\,d\Hn. 
\end{equation}
By definition, we have that for every $\sigma\in\partial\Omega$
\[
\lim_{\delta\to 0^+}\chi_{\Gamma_{k+\delta}}(\sigma)=\chi_{\Gamma_k}(\sigma).
\]
Let $\sigma\in \Gamma_{k}$, then the function $y_{k+\delta}(\sigma)$ is defined for small enough $\delta<\delta_\sigma$,  and by the implicit function theorem and the regularity of $P_k(y,\sigma)$, we get
\[
\lim_{\delta\to 0^+}y_{k+\delta}(\sigma)=y_k(\sigma).
\]
 Therefore, by \eqref{ykestiates}, the monotonicity of $z_k$, and the dominated convergence theorem we have
\begin{equation}
\label{eq: firstintegral+}
\lim_{\delta\to 0^+}\int_{\Gamma_{k+\delta}}(y_{k}-y_{k+\delta})\,d\Hn=0.
\end{equation}
On the other hand, for every $\sigma\in\partial\Omega$,
\[
\lim_{\delta\to 0^+}\chi_{\Set{1-k-\delta\le \frac{\eps H}{\beta}< 1-{k}}}(\sigma)=0,
\]
which entails
\begin{equation}
\label{eq: secondintegral+}
    \lim_{\delta\to 0^+}\int_{\Set{1-k-\delta\le \frac{\eps H}{\beta}< 1-{k}}}(y_{{k}}-1)\,d\Hn=0.
\end{equation}
Joining \eqref{eq: limit+}, \eqref{eq: firstintegral+}, and \eqref{eq: secondintegral+}, we get
\[
\lim_{\delta\to 0^+} M(k)-M(k+\delta)=0.
\]
We now fix $k\in(0,k_0)$, $\delta>0$, and we compute
\begin{equation}
    \label{eq: limit-}
    \beta(M(k-\delta)-M(k))=\int_{\Gamma_{k}}(y_{k-\delta}-y_{k})\,d\Hn+\int_{\Set{1-k\le \frac{\eps H}{\beta}< 1-{k}+\delta}}(y_{{k-\delta}}-1)\,d\Hn. 
\end{equation}
As in the previous case, by the implicit function theorem, for every $\sigma\in\Gamma_k$,
\[\lim_{\delta\to0^+}y_{k-\delta}(\sigma)=y_k(\sigma).\]
By the dominated convergence theorem,
\begin{equation}
    \label{eq: firstintegral-}
    \lim_{\delta\to 0^+}\int_{\Gamma_k}(y_{k-\delta}-y_k)\,d\Hn=0.
\end{equation}
On the other hand, we have that for every $\sigma\in\partial\Omega$
\[\lim_{\delta\to 0^+}\chi_{\Set{1-k\le \frac{\eps H}{\beta}< 1-{k}+\delta}}(\sigma)=\chi_{\Set{\frac{\eps H}{\beta}=1-k}}(\sigma),\]
and, for every $\sigma$ such that ${\eps H(\sigma)}=\beta(1-k)$, we may use the monotonicity of $y_{k-\delta}$ and then we pass to the limit in \eqref{implicit}, having that
\[\lim_{\delta\to0^+}y_{k-\delta}(\sigma)=1,\]
which entails
\begin{equation}
\label{eq: secondintegral-}
\lim_{\delta\to0^+}\int_{\Set{1-k\le \frac{\eps H}{\beta}< 1-{k}+\delta}}(y_{{k-\delta}}-1)\,d\Hn=0.
\end{equation}
Joining \eqref{eq: limit-}, \eqref{eq: firstintegral-}, and \eqref{eq: secondintegral-}, we get
\[
\lim_{\delta\to 0^+}M(k-\delta)-M(k)=0,
\]
thus concluding the proof of the continuity of $M$.

Finally, by monotonicity, and by \eqref{ykestiates}, and \eqref{hpH}, we have that
\[
\lim_{k\to0^+}M(k)=+\infty,\quad\lim_{k\to k_0^-}M(k)=0.
\]
and the proposition is proved. 
\end{proof}
\begin{teor}\label{teor:exist}
Let \eqref{hpH} hold true. Then, for every $m>0$, the function $\mu_{k_m}$ is the unique minimizer to problem \eqref{problemm}.
\end{teor}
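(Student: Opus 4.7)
The plan is to interpret the construction of $\mu_{k_m}$ as the unique Karush--Kuhn--Tucker point of the constrained problem \eqref{problemm} and then promote first-order optimality to global minimality via convexity of the integrand of $\mathcal{G}_\eps$ under assumption \eqref{hpH}. First I compute the derivatives in $h$ of
\[
f_H(h) = \dfrac{1}{1+\beta h} + \eps H\,\dfrac{h(2+\beta h)}{2(1+\beta h)^2}.
\]
Setting $y = 1+\beta h$, a short calculation gives $\partial_h f_H(h) = (\eps H - \beta y)/y^3$ and $\partial_h^2 f_H(h) = \beta(2\beta y - 3\eps H)/y^4$. Under \eqref{hpH}, one has $3\eps H \le 2\beta \le 2\beta y$ for every $y\ge 1$, with strict inequality on $(1,+\infty)$, so $f_H$ is convex on $[0,+\infty)$ and $\partial_h^2 f_H > 0$ on $(0,+\infty)$, which is enough to ensure that $f_H$ is not affine on any non-trivial subinterval of $[0,+\infty)$.

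Next I verify the KKT conditions for $\mu_{k_m}$, viewed as a critical point with Lagrange multiplier $k_m\beta$ attached to the mass constraint $\int_{\partial\Omega} h\,d\Hn \le m$. The mass constraint is saturated by the previous proposition. On $\Gamma_{k_m} = \set{\mu_{k_m}>0}$ the defining polynomial identity $k_m y_{k_m}^3 - y_{k_m} + \eps H/\beta = 0$ is precisely $\partial_h f_H(\mu_{k_m}) = -k_m\beta$; on the complement $\partial\Omega\setminus\Gamma_{k_m}$ the defining inequality $\eps H/\beta \ge 1-k_m$ translates into $\partial_h f_H(0) = \eps H - \beta \ge -k_m\beta$, which is the complementary slackness for the constraint $h\ge 0$.

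Combining these ingredients, for every $h\in\mathcal{H}_m$ convexity of $f_H$ yields the tangent-line bound
\[
\mathcal{G}_\eps(h) - \mathcal{G}_\eps(\mu_{k_m}) \ge \beta\int_{\partial\Omega} \partial_h f_H(\mu_{k_m})\,(h-\mu_{k_m})\,d\Hn.
\]
Splitting the right-hand side over $\Gamma_{k_m}$ and its complement, substituting the KKT values of $\partial_h f_H(\mu_{k_m})$, and using $\int \mu_{k_m}\,d\Hn = m$ together with $\mu_{k_m} = 0$ off $\Gamma_{k_m}$, the right-hand side rearranges into
\[
k_m\beta\left(m - \int_{\partial\Omega} h\,d\Hn\right) + \int_{\partial\Omega\setminus\Gamma_{k_m}} \bigl(\eps H - \beta(1-k_m)\bigr)\,h\,d\Hn,
\]
and both summands are non-negative by the admissibility of $h$ ($\int h\le m$, $h\ge 0$, $k_m>0$) and by the definition of $\partial\Omega\setminus\Gamma_{k_m}$. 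This proves $\mathcal{G}_\eps(\mu_{k_m})\le \mathcal{G}_\eps(h)$, so $\mu_{k_m}$ is a minimizer. For uniqueness, any $h$ achieving the minimum must force the tangent-line inequality to be tight $\Hn$-almost everywhere; the strict convexity of $f_H$ on $[0,+\infty)$ (which follows from $\partial_h^2 f_H > 0$ on $(0,+\infty)$) then gives $h = \mu_{k_m}$ $\Hn$-almost everywhere. The only delicate point worth checking is the boundary case $\eps H/\beta = 2/3$, where $\partial_h^2 f_H(0)$ can vanish, but the positivity of $\partial_h^2 f_H$ on $(0,+\infty)$ still precludes any affinity interval and keeps strict convexity on $[0,+\infty)$ intact.
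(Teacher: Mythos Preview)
Your proof is correct and essentially identical to the paper's: both use the convexity of the integrand $f_H$ under \eqref{hpH} together with the KKT relations $\partial_h f_H(\mu_{k_m})=-k_m\beta$ on $\Gamma_{k_m}$ and $\partial_h f_H(0)\ge -k_m\beta$ on its complement, and then conclude uniqueness from strict convexity. The only cosmetic difference is that the paper phrases the convexity step as monotonicity of $g(t)=\mathcal{G}_\eps(\mu_{k_m}+t(h-\mu_{k_m}))$ rather than invoking the tangent-line inequality directly (and your displayed rearrangement is off by a harmless overall factor of $\beta$).
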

\begin{proof}
Let $h\colon \partial\Omega\to R$ with $h\ge0$ and 
\[\int_{\partial\Omega}h\,d\Hn\le m.\]
for every $t\in[0,1]$ consider
\[h_t=\mu_k+t(h-\mu_k)\]
and
\[g(t)=\mathcal{G}_\eps(h_t).\]
We claim that $g(t)$ is increasing in $t$.
By explicit computation, we have that
\[g'(t)=\beta\int_{\partial\Omega}\dfrac{\eps H-\beta(1+\beta h_t)}{(1+\beta h_t)^3}(h-\mu_k)\,d\Hn.\]
From \eqref{hpH} we have that, for every $\sigma\in \partial\Omega$ the function
\[x\mapsto \dfrac{\eps H-\beta(1+\beta x)}{(1+\beta x)^3}\]
is increasing on $[0,+\infty)$, so that
\[\begin{split} g'(t)&\ge \beta\int_{\partial\Omega}\dfrac{\eps H-\beta(1+\beta \mu_k)}{(1+\beta \mu_k)^3}(h-\mu_k)\,d\Hn\\[10 pt]
&=\beta\left(\int_{\partial\Omega\setminus\Gamma_k} (\eps H-\beta)h\,d\Hn+\int_{\Gamma_k}\dfrac{\eps H-\beta(1+\beta \mu_k)}{(1+\beta \mu_k)^3}(h-\mu_k)\,d\Hn\right).
\end{split}\]
By \eqref{implicit} we have that on $\Gamma_k$
\[\dfrac{\eps H-\beta(1+\beta \mu_k)}{(1+\beta \mu_k)^3}=-\beta k\]
while on $\partial\Omega\setminus\Gamma_k$
\[\eps H-\beta\ge-\beta k\]
so that
\[g'(t)\ge\beta^2 k\int_{\partial\Omega}(\mu_k-h)\,d\Hn\ge0,\]
and the claim is proven.  In particular, we have that
\[\mathcal{G}_\eps(\mu_k)=g(0)\le g(1)=\mathcal{G}_\eps(h)\]
that is, $\mu_k$ is a minimizer for problem \eqref{problemm}. Finally, by \eqref{hpH}, we have that, for every $\sigma\in\partial\Omega$,  the function
\[x\in[0,+\infty)\mapsto\dfrac{1}{1+\beta x}+\eps H(\sigma)\dfrac{x(2+\beta x)}{2(1+\beta x)^2}\]
is strictly convex, thus problem \eqref{problemm} admits a unique minimizer.
\end{proof}
\begin{oss}
Notice that the optimal configuration $\mu_{k_m}$
 concentrates where the mean curvature is smaller: for simplicity, let us write $k=k_m$, and let us take $\sigma_1,\sigma_2\in \Gamma_{k}$ such that
\[
H(\sigma_1)<H(\sigma_2).
\]
Noticing that
\[
P_k(y_k(\sigma_1),\sigma_1)=0=P_k(y_k(\sigma_2),\sigma_2),
\]
we get
\[
ky_k(\sigma_1)^3-y_k(\sigma_1)>ky_k(\sigma_2)^3-y_k(\sigma_2).
\]
By \eqref{ykestiates}, we can use the monotonicity of the function $y\mapsto ky^3-y$ on $[1/\sqrt{3k},+\infty)$, getting
\[
y_k(\sigma_1)>y_k(\sigma_2),
\]
so that $\mu_k(\sigma_1)>\mu_k(\sigma_2)$. 
\end{oss}

\begin{oss}\label{oss:bigH}
Notice that, if for every $\sigma\in\partial\Omega$
\begin{equation}\label{hpH2}\dfrac{\eps H(\sigma)}{\beta}\ge2,\end{equation} 
then the optimal configuration is given by $\mu\equiv0$. Indeed if \eqref{hpH2} holds, for every $\sigma\in \partial\Omega$ the function
\[x\in[0,+\infty)\mapsto \dfrac{1}{1+\beta x}+\dfrac{\eps H(\sigma)}{\beta}\dfrac{\beta x(2+\beta x)}{2(1+\beta x)^2}\]
reaches its minimum for $x=0$.

\end{oss}
\subsection{Minimization with perimeter constraint}
Fix $\eps>0$. For every $P>0$ and $m>0$ we define, 
\[
\mathcal{K}_P=\Set{\Omega\subset\R^n | \begin{aligned}
&\Omega\text{ open and bounded with $C^{1,1}$ boundary}\\
&H_\Omega\ge 0\\
&P(\Omega)=P
\end{aligned}},
\]
where $H_\Omega$ is the mean curvature of $\Omega$. Let $\Omega\in\mathcal{K}_P$, and let $h\in\mathcal{H}_m(\partial\Omega)$, where $\mathcal{H}_m$ is the set defined in \eqref{eq: Hm}, then we study the functional
\[\mathcal{G}_\eps(\Omega,h)=\beta\int_{\partial\Omega}\left(\dfrac{1}{1+\beta h}+\eps H_\Omega\dfrac{h(2+\beta h)}{2(1+\beta h)^2}\right)\,d\Hn.\]
We will now consider the problem
\begin{equation}\label{Pproblem}\inf\Set{\mathcal{G}_\eps(\Omega,h)|(\Omega,h)\in\mathcal{K}_p\times\mathcal{H}_m(\partial\Omega)}.\end{equation}

\begin{defi}[Cookie Shape]
\label{defi: cookie}
For any $r,R>0$ we define the \emph{cookie shape}
\[
C_{r,R}=\Set{(x',x_n) | -f_{r,R}(x')\le x_n\le f_{r,R}(x')},
\]
where
\[
f_{r,R}(x')=\begin{cases}
r &\abs{x'}\le R, \\
\sqrt{r^2-\left(\abs{x'}-R\right)^2} &R<\abs{x'}\le R+r.
\end{cases}
\]
\end{defi}
\begin{oss}
For every $r,R>0$, We have that $C_{r,R}$ is a convex set with $C^{1,1}$ boundary and \[H=H_{C_{r,R}}\ge \frac{1}{r}\chi_{\set{H>0}}.\]
Moreover, 
\begin{equation}
    \label{eq: cookieper}
    P(C_{r,R})=2\omega_{n-1}\left(R^{n-1}+(n-1)r\int_0^1\frac{\left(r\rho+R\right)^{n-2}}{\sqrt{1-\rho^2}}\,d\rho\right).
\end{equation}
We observe that the function $P(C_{r,R})$ is increasing in $r$ and $R$.
\end{oss}

\begin{teor}
For every $P,m>0$ we have
\[\inf\Set{\mathcal{G}_\eps(\Omega,h)|(\Omega,h)\in\mathcal{K}_p\times\mathcal{H}_m(\partial\Omega)}=\dfrac{\beta P^2}{P+\beta m}\]
and the infimum is asymptotically achieved by
a sequence of thin cookie shapes.
\end{teor}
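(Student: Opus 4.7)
The plan is to split the argument into a lower bound valid for all admissible pairs and a matching upper bound realized by a degenerating sequence of cookie shapes. For the lower bound, I would observe that every $\Omega\in\mathcal{K}_P$ has $H_\Omega\ge 0$ and every $h\in\mathcal{H}_m(\partial\Omega)$ is nonnegative, so the curvature term in $\mathcal{G}_\eps$ is pointwise nonnegative. Hence
\[
\mathcal{G}_\eps(\Omega,h)\ge \beta\int_{\partial\Omega}\frac{1}{1+\beta h}\,d\Hn=\mathcal{F}_0(h),
\]
and \autoref{prop: minimosenzaeps} applied with $P(\Omega)=P$ yields $\mathcal{G}_\eps(\Omega,h)\ge \beta P^2/(P+\beta m)$, which is the claimed lower bound.

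For the matching upper bound I would build a family of cookie shapes with prescribed perimeter. From \eqref{eq: cookieper} the map $(r,R)\mapsto P(C_{r,R})$ is continuous and strictly increasing in each variable, and $\lim_{r\to 0^+}P(C_{r,R})=2\omega_{n-1}R^{n-1}$. Hence, for each small $r>0$ there is a unique $R(r)>0$ with $P(C_{r,R(r)})=P$, and $R(r)\to R_0:=(P/(2\omega_{n-1}))^{1/(n-1)}$ as $r\to 0^+$. Write $\partial C_{r,R(r)}=F_r\cup T_r$, where $F_r$ is the union of the two flat disks of radius $R(r)$ (so $H_\Omega\equiv 0$ on $F_r$) and $T_r$ is the toroidal curved cap. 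From the same formula one reads $\Hn(F_r)=2\omega_{n-1}R(r)^{n-1}\to P$ and $\Hn(T_r)=O(r)\to 0$.

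The key idea is then to concentrate the insulating mass on the flat portion, killing the curvature term of $\mathcal{G}_\eps$. Set
\[
h_r=\frac{m}{\Hn(F_r)}\chi_{F_r},
\]
which lies in $\mathcal{H}_m(\partial C_{r,R(r)})$ with total mass $m$. Since $h_r H_\Omega\equiv 0$ on $\partial C_{r,R(r)}$, a direct computation gives
\[
\mathcal{G}_\eps(C_{r,R(r)},h_r)=\frac{\beta\Hn(F_r)^2}{\Hn(F_r)+\beta m}+\beta\Hn(T_r),
\]
and letting $r\to 0^+$ this tends to $\beta P^2/(P+\beta m)$, matching the lower bound and proving the asymptotic optimality of the cookie sequence.

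The main subtlety I expect concerns the legitimacy of evaluating $\mathcal{G}_\eps$ on the pairs $(C_{r,R(r)},h_r)$: the cookie has only $C^{1,1}$ boundary, so $H_\Omega$ must be interpreted as the generalized mean curvature from Section~2 (for a convex $C^{1,1}$ set it lies in $L^\infty$, equals zero on $F_r$ and at least $1/r$ on $T_r$), and $h_r$ is merely piecewise constant; both are nevertheless admissible in the class $\mathcal{K}_P\times\mathcal{H}_m$ as defined. A secondary remark worth including is that the infimum is not attained: a genuine minimizer would require $h$ constant on $\partial\Omega$ (from the strict convexity yielding uniqueness in \autoref{prop: minimosenzaeps}) and simultaneously $H_\Omega h\equiv 0$, which is incompatible on any bounded $C^{1,1}$ domain with $H_\Omega\ge 0$. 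This rigidity is precisely why the value $\beta P^2/(P+\beta m)$ can only be reached along degenerating cookies.
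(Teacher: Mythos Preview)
Your proposal is correct and follows essentially the same approach as the paper: the lower bound via $H_\Omega\ge 0$ and \autoref{prop: minimosenzaeps}, and the upper bound via cookie shapes $C_{r,R(r)}$ with perimeter $P$, placing all the mass $m$ on the flat faces where $H_\Omega=0$. Your extra remarks on the $C^{1,1}$ interpretation of $H_\Omega$ and on non-attainment of the infimum are accurate additions not spelled out in the paper.
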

\begin{proof}
For every $ (\Omega,h)\in\mathcal{K}_p\times\mathcal{H}_m(\partial\Omega)$,
\[\mathcal{G}_\eps(\Omega,h)\ge \beta\int_{\partial\Omega}\dfrac{1}{1+\beta h}\,d\Hn\ge\dfrac{\beta P^2}{P+\beta m}.\]
Let $r_k>0$ be a decreasing sequence with
\[\lim_{k}r_k=0;\]
let $R_k$ be such that, for every $k$,
\[P(C_{r_k,R_k})=P.\]
Then $R_k$ is increasing in $k$ and
\[\lim_{k} R_k=\left(\dfrac{P}{2\omega_{n-1}}\right)^\frac{1}{n-1}.\]
Consider
\[h_k(\sigma)=\begin{cases} \dfrac{m}{2\omega_{n-1}R_k^{n-1}} &\text{if }H(\sigma)=0,\\[5 pt]
0 &\text{if }H(\sigma)>0.
\end{cases}\]
We have that
\[\mathcal{G}_\eps(C_{r_k,R_k},h_k)=\dfrac{2\beta\omega_{n-1}R_k^{n-1}}{1+\frac{m}{2\omega_{n-1}R_k^{n-1}}}+\beta\left(P(C_{r_k,R_k})-2\omega_{n-1}R_k^{n-1}\right).\]
Passing to the limit for $k$ to infinity, we have
\[
\lim_{k}\mathcal{G}_\eps(C_{r_k,R_k},h_k)=\dfrac{\beta P^2}{P+\beta m}=\min_{h\in\mathcal{H}_m}\mathcal{F}_0(h).
\]
\end{proof}

\subsection{Maximization with geometric constraints}\label{section:rmk}
Fix $\eps,m>0$. For every bounded, open set $\Omega\subseteq\R^n$ with $C^{1,1}$ boundary, we let
\[\mathcal{G}_\eps(\Omega)=\inf\Set{\mathcal{G}_\eps(\Omega,h)|\, h\in\mathcal{H}_m},\]
 where $\mathcal{H}_m$ is the set defined in \eqref{eq: Hm}. This section will study the maximization of $\mathcal{G}_\eps(\Omega)$ with fixed quermassintegral. We refer to \cite{schneider,burago} for the following
\begin{defi}[Quermassintegrals]
Let $\Omega\subseteq\R^n$ be a nonempty,
bounded, convex set. We define the \emph{quermassintegrals} as the unique coefficients $W_j(\Omega)$ such that
\[
\abs{\Omega+tB}=\sum_{j=0}^{n}\binom{n}{j}W_j(\Omega)t^j,
\]
where $B$ is the unit ball in $\R^n$, and 
\[
\Omega+tB=\Set{x+ t y | x\in \Omega,\: y\in B}.
\]
\end{defi}
In particular $W_0(\Omega)$ is the measure of $\Omega$ and $W_n(\Omega)=\omega_n$, the measure of the unit ball.
\begin{teor}[Alexandrov-Fenchel Inequality]
\label{alex-fench}
Let $0\le i<j\le n-1$, and let  $\Omega\subseteq\R^n$ be a nonempty,
bounded, convex set, then
\[
\left(\frac{W_i(\Omega)}{\omega_n}\right)^{\frac{1}{n-i}}\le \left(\frac{W_j(\Omega)}{\omega_n}\right)^{\frac{1}{n-j}}.
\]
Moreover, the inequality holds as equality if and only if $\Omega$ is a ball.
\end{teor}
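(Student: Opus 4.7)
The plan is to deduce the stated inequality from the discrete log-concavity of the sequence $j\mapsto W_j(\Omega)$, namely
\[
W_j(\Omega)^{2}\ge W_{j-1}(\Omega)\,W_{j+1}(\Omega),\qquad 1\le j\le n-1,
\]
together with the normalisation $W_n(\Omega)=\omega_n$, and then run a discrete averaging argument to chain the consecutive inequalities into the general form.

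For the log-concavity step I would express the quermassintegrals as mixed volumes,
\[
W_j(\Omega)=V(\underbrace{\Omega,\dots,\Omega}_{n-j},\underbrace{B,\dots,B}_{j}),
\]
and invoke the Alexandrov-Fenchel inequality for mixed volumes
\[
V(K_1,K_2,K_3,\dots,K_n)^{2}\ge V(K_1,K_1,K_3,\dots,K_n)\,V(K_2,K_2,K_3,\dots,K_n),
\]
applied with $K_1=\Omega$, $K_2=B$, and the remaining $n-2$ slots filled with the appropriate mix of copies of $\Omega$ and $B$. This is the main obstacle of the proof: the general Alexandrov-Fenchel inequality is a deep classical fact whose known proofs (Aleksandrov's polyhedral-approximation argument via the spectral analysis of mixed discriminants, and the more recent Hodge-theoretic variants) are all substantially involved. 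In our special quermassintegral setting one can however circumvent the full machinery using Kubota's integral formulas, which express $W_j(\Omega)$ as an average of $(n-j)$-dimensional projection volumes over a Grassmannian, and then deduce log-concavity by iterating the Brunn-Minkowski inequality in lower dimensions.

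Once log-concavity is available, set $c_j:=\log(W_j(\Omega)/\omega_n)$, so that $c_n=0$ and $j\mapsto c_j$ is concave, meaning the successive increments $d_k:=c_{k+1}-c_k$ are non-increasing in $k$. For any $i<n$ one writes
\[
\frac{c_i}{n-i}=-\frac{1}{n-i}\sum_{k=i}^{n-1} d_k,
\]
which is the negative of the arithmetic mean of $d_i,d_{i+1},\dots,d_{n-1}$. When $i$ increases by one the largest of these terms, namely $d_i$, is removed from the average, so the mean decreases; hence $c_i/(n-i)$ is non-decreasing in $i$. Exponentiating this monotonicity yields precisely $(W_i(\Omega)/\omega_n)^{1/(n-i)}\le(W_j(\Omega)/\omega_n)^{1/(n-j)}$ for $i<j$.

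For the equality case, equality in the chain forces $d_i=d_{i+1}=\dots=d_{n-1}$, hence equality in the underlying Brunn-Minkowski inequality for $\Omega$ and $B$, which is well-known to characterise $\Omega$ as a ball.
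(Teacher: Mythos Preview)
The paper does not supply its own proof of this theorem: it is quoted as a classical result from convex geometry, with the reader referred to the monographs of Schneider and Burago--Zalgaller for the argument. So there is nothing in the paper to compare your attempt against.

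That said, your sketch is a faithful outline of the standard textbook route found in those references: one represents $W_j(\Omega)$ as the mixed volume $V(\Omega[n-j],B[j])$, obtains the log-concavity $W_j^2\ge W_{j-1}W_{j+1}$ from the Alexandrov--Fenchel inequality for mixed volumes, and then chains these into the general statement via the concavity-of-averages argument you wrote out (using $W_n=\omega_n$ as anchor). Your alternative suggestion of deriving the quermassintegral case directly from Kubota's recursion and Brunn--Minkowski in lower dimensions is also standard and arguably more self-contained for this special case.

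One cautionary remark on the equality discussion: your last line conflates two things. Equality in the chain forces equality in each step $W_j^2=W_{j-1}W_{j+1}$, and for these particular mixed volumes (one body being the Euclidean ball) the equality case is indeed known to force $\Omega$ to be a ball; but the characterisation of equality in the general Alexandrov--Fenchel inequality is a famously delicate matter, and even in the quermassintegral setting the clean statement ``equality iff ball'' requires $\Omega$ to be full-dimensional. Your appeal to ``equality in Brunn--Minkowski'' is the right heuristic, since homothety with $B$ means a ball, but in a fully rigorous write-up you would want to invoke the precise equality-case result for the relevant mixed-volume inequality rather than leave it at that.
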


\begin{oss}
    Let $\Omega$ be a bounded, open, convex set with $C^2$ boundary and nonzero Gaussian curvature, then the quermassintegral are related to the principal curvatures of the boundary of $\Omega$. Indeed we have that for every $j=1,\dots,n$
\[W_j=\dfrac{1}{n}\int_{\partial\Omega} H_{j-1}(\sigma)\,d\Hn\]
Here $H_j$ denotes the $j$-th normalized elementary symmetric function of the principal curvatures
of $\partial \Omega$, that is $H_0=1$ and, for every $j=1,\dots n-1$,
\[H_j(\sigma)=\binom{n-1}{j}^{-1}\sum_{1\le i_1\le\dots\le i_j\le n-1} k_{i_1}(\sigma)\cdots k_{i_j}(\sigma),\]
where $k_1(\sigma),\dots,k_{n-1}(\sigma)$  are the principal curvatures at a point $\sigma\in\partial\Omega$. In particular, we have that
\[W_1(\Omega)=\dfrac{1}{n} P(\Omega)\]
and
\[W_2(\Omega)=\dfrac{1}{n(n-1)}\int_{\partial\Omega} H_\Omega \,d\Hn.\]
\end{oss}
In the planar case, we have the following
\begin{prop}
\label{prop: ballmaxn2}
    Let $n=2$, and let $\Omega$ be a bounded, open, simply connected set $\Omega$ with $C^2$ boundary such that either 
\[P(\Omega)\ge 3\pi\dfrac{\eps}{\beta}\]
or
\[P(\Omega)\le \pi\dfrac{\eps}{\beta}.\]
Then
\begin{equation}
\label{eq: maxball}
\mathcal{G}_\eps(\Omega)\le \mathcal{G}_\eps(\Omega^*),
\end{equation}
where $\Omega^*$ is the ball having the same perimeter as $\Omega$.
\end{prop}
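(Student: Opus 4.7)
The plan is to compute $\mathcal{G}_\eps(\Omega^*)$ exactly by exploiting the rotational symmetry of the ball, and then to bound $\mathcal{G}_\eps(\Omega)$ from above by testing the infimum against a well-chosen constant $h_0$. The two values will coincide thanks to the Gauss--Bonnet theorem in the plane. The two assumptions on $P$ are calibrated precisely so that $\Omega^*$ satisfies either \eqref{hpH} or \eqref{hpH2}, which is exactly what allows us to identify the minimizer on $\Omega^*$ in closed form.

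On the ball $\Omega^*$ of perimeter $P$, the curvature is the constant $H_{\Omega^*}\equiv 2\pi/P$. In the regime $P\ge 3\pi\eps/\beta$ we have $\eps H_{\Omega^*}/\beta\le 2/3$, so \autoref{teor:exist} produces a unique minimizer $\mu_{k_m}$ of $\mathcal{G}_\eps(\Omega^*,\cdot)$ in $\mathcal{H}_m$. Since the defining equation $k_m y_{k_m}(\sigma)^3-y_{k_m}(\sigma)+\eps H(\sigma)/\beta=0$ depends on $\sigma$ only through $H(\sigma)$, the function $y_{k_m}$ is constant on $\partial\Omega^*$, and the mass constraint forces $\mu_{k_m}\equiv m/P$. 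In the opposite regime $P\le \pi\eps/\beta$ we have $\eps H_{\Omega^*}/\beta\ge 2$, so \autoref{oss:bigH} gives $h\equiv 0$ as the unique minimizer, and hence $\mathcal{G}_\eps(\Omega^*)=\beta P$.

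To conclude, I would invoke the Gauss--Bonnet theorem: for any bounded, simply connected planar domain with $C^2$ boundary,
\[
\int_{\partial\Omega} H\,d\mathcal{H}^1 = 2\pi.
\]
Consequently, for any constant $h_0\ge 0$,
\[
\mathcal{G}_\eps(\Omega,h_0)
=\frac{\beta P}{1+\beta h_0}+\beta\eps\,\frac{h_0(2+\beta h_0)}{2(1+\beta h_0)^2}\int_{\partial\Omega} H\,d\mathcal{H}^1
=\mathcal{G}_\eps(\Omega^*,h_0),
\]
since both sides depend only on $P$ and on the topological invariant $\int H=2\pi$. Choosing $h_0=m/P$ in the first regime, and $h_0=0$ in the second, yields
\[
\mathcal{G}_\eps(\Omega)\le \mathcal{G}_\eps(\Omega,h_0)=\mathcal{G}_\eps(\Omega^*,h_0)=\mathcal{G}_\eps(\Omega^*),
\]
which is the inequality \eqref{eq: maxball}. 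The only delicate step in this plan is the identification of the ball's minimizer with the constant $h_0$; once that is in place the rest reduces to a direct calculation combined with Gauss--Bonnet, so no serious obstacle is expected.
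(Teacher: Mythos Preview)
Your proposal is correct and follows essentially the same approach as the paper's own proof: in both regimes you identify the minimizer for the ball (the constant $m/P$ when $\eps H_{\Omega^*}/\beta\le 2/3$ via \autoref{teor:exist}, and $0$ when $\eps H_{\Omega^*}/\beta\ge 2$ via \autoref{oss:bigH}), test $\mathcal{G}_\eps(\Omega,\cdot)$ at that same constant, and match the two expressions using Gauss--Bonnet. The only difference is cosmetic: you spell out explicitly why the minimizer on $\partial\Omega^*$ is constant (from the $\sigma$-dependence of the defining polynomial being only through $H$), whereas the paper simply asserts $\mathcal{G}_\eps(\Omega^*)=\mathcal{G}_\eps(\Omega^*,m/P(\Omega^*))$ as a direct consequence of \autoref{teor:exist}.
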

\begin{proof}
If
\[P(\Omega)\le \pi\dfrac{\eps}{\beta},\]
then
\[\dfrac{\eps H_{\Omega^*}}{\beta}\ge 2,\]
and, by \autoref{oss:bigH}, we get
\[\mathcal{G}_\eps(\Omega^*)=\mathcal{G}_\eps(\Omega^*,0)=\beta P(\Omega^*).\]
On the other hand,
\[\mathcal{G}_\eps(\Omega)\le \mathcal{G}_\eps(\Omega,0)=\beta P(\Omega)=\beta P(\Omega^*),\]
which gives \eqref{eq: maxball}.
If
\[P(\Omega)\ge 3\pi\dfrac{\eps}{\beta},\]
then
\[\dfrac{\eps H_{\Omega^*}}{\beta}\le \dfrac{2}{3},\]
and, by \autoref{teor:exist}, we get
\[\mathcal{G}_\eps(\Omega^*)=\mathcal{G}_\eps(\Omega^*,m/P(\Omega^*)).\]
On the other hand,
\[\begin{split}\mathcal{G}_\eps(\Omega)&\le \mathcal{G}_\eps(\Omega,m/P(\Omega^*))\\
&=\beta\left(\dfrac{P(\Omega^*)P(\Omega)}{P(\Omega^*)+\beta m}+\dfrac{\eps m(2P(\Omega^*)+\beta m)}{2(P(\Omega^*)+\beta m)^2}\int_{\partial\Omega} H_\Omega\,d\Hn\right).
\end{split}\]
By Gauss-Bonnet theorem we have that
\[\int_{\partial\Omega} H_\Omega\,d\Hn=2\pi=\int_{\partial \Omega^*} H_{\Omega^*}\,d\Hn,\]
so that
\[
    \begin{split}
        \mathcal{G}_\eps(\Omega)&\le \beta\left(\dfrac{P(\Omega^*)^2}{P(\Omega^*)+\beta m}+\dfrac{\eps m(2P(\Omega^*)+\beta m)}{2(P(\Omega^*)+\beta m)^2}\int_{\partial \Omega^*} H_{\Omega^*}\,d\Hn\right)\\[10 pt]
        &=\mathcal{G}_\eps(\Omega^*,m/P(\Omega^*)),
    \end{split}
\]
which is \eqref{eq: maxball}.
\end{proof}

In the general case of possibly higher dimensions, we have
\begin{prop}
\label{prop: ballmaxn}

   Let $n\ge3$, $2\le k\le n-1$,  and let $\Omega$ be a bounded, open, convex set with $C^2$ boundary and nonzero Gaussian curvature such that either 
\begin{equation}
\label{eq: condquermass1}
W_{k}(\Omega)\ge\omega_n\left(\dfrac{3(n-1)\eps}{2\beta}\right)^{n-k}
\end{equation}
or
\begin{equation}
\label{eq: condquermass2}
W_{k}(\Omega)\le\omega_n\left(\dfrac{(n-1)\eps}{2\beta}\right)^{n-k}.
\end{equation}
Then 
\[\mathcal{G}_\eps(\Omega)\le\mathcal{G}_\eps(\Omega^*)\] 
where $\Omega^*$ is the ball such that $W_{k}(\Omega)=W_k(\Omega^*)$.

\end{prop}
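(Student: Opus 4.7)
The plan is to mirror the proof of \autoref{prop: ballmaxn2}, using the Alexandrov-Fenchel inequality (\autoref{alex-fench}) in place of the identities $P(\Omega)=P(\Omega^*)$ and Gauss-Bonnet that were available there. Since $\Omega^*$ is a ball of some radius $R^*$, one has $W_k(\Omega^*)=\omega_n (R^*)^{n-k}$, so the constraint $W_k(\Omega)=W_k(\Omega^*)$ determines $R^*$ uniquely, and $H_{\Omega^*}=(n-1)/R^*$ is constant on $\partial\Omega^*$. With this identification, hypothesis \eqref{eq: condquermass2} translates to $\eps H_{\Omega^*}/\beta\ge 2$ and \eqref{eq: condquermass1} translates to $\eps H_{\Omega^*}/\beta\le 2/3$, so the two regimes correspond exactly to those treated in the planar case.

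In the first regime, \autoref{oss:bigH} applied to $\Omega^*$ gives $\mathcal{G}_\eps(\Omega^*)=\mathcal{G}_\eps(\Omega^*,0)=\beta P(\Omega^*)$, while testing $\mathcal{G}_\eps(\Omega,\cdot)$ with $h\equiv 0$ yields $\mathcal{G}_\eps(\Omega)\le\beta P(\Omega)$. \autoref{alex-fench} with $(i,j)=(1,k)$, together with $W_k(\Omega)=W_k(\Omega^*)$ and the equality case for the ball, then forces $P(\Omega)\le P(\Omega^*)$, and we are done.

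In the second regime, $\eps H_{\Omega^*}/\beta\le 2/3$ places us under hypothesis \eqref{hpH} on $\Omega^*$, so \autoref{teor:exist} yields a unique minimizer $\mu_{k_m}$ of $\mathcal{G}_\eps(\Omega^*,\cdot)$ in $\mathcal{H}_m(\partial\Omega^*)$. Rotational symmetry of the ball together with this uniqueness forces $\mu_{k_m}$ to be constant, and the integral constraint then fixes it as $h^*:=m/P(\Omega^*)$; hence $\mathcal{G}_\eps(\Omega^*)=\mathcal{G}_\eps(\Omega^*,h^*)$. The same constant $h^*$ is admissible on $\partial\Omega$ because $h^*P(\Omega)\le m$ is exactly the inequality $P(\Omega)\le P(\Omega^*)$. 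A direct subtraction then gives
\[
\mathcal{G}_\eps(\Omega,h^*)-\mathcal{G}_\eps(\Omega^*,h^*)=\frac{\beta(P(\Omega)-P(\Omega^*))}{1+\beta h^*}+\eps\beta\,\frac{h^*(2+\beta h^*)}{2(1+\beta h^*)^2}\left(\int_{\partial\Omega} H_\Omega\,d\Hn-\int_{\partial\Omega^*} H_{\Omega^*}\,d\Hn\right),
\]
and both summands are nonpositive: the first by $(i,j)=(1,k)$ and the second by $(i,j)=(2,k)$ in \autoref{alex-fench}, using $\int_{\partial E}H_E\,d\Hn = n(n-1)W_2(E)$ (which degenerates to an equality when $k=2$). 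Combining with $\mathcal{G}_\eps(\Omega)\le\mathcal{G}_\eps(\Omega,h^*)$ yields the desired inequality.

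The only non-cosmetic step is the reduction of the optimal $\mu_{k_m}$ on $\Omega^*$ to the constant $h^*$: this relies essentially on the uniqueness half of \autoref{teor:exist}, since a priori a symmetrization argument only produces another minimizer of the same energy rather than identifying it. Once this is in hand, the rest is a clean bookkeeping of two Alexandrov-Fenchel inequalities matched against the manifest positivity of the coefficient multiplying $\int H$ in $\mathcal{G}_\eps$.
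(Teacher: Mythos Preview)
Your proof is correct and follows essentially the same route as the paper's: translate the quermassintegral hypotheses into the bounds $\eps H_{\Omega^*}/\beta\ge 2$ or $\le 2/3$, then rerun the two cases of \autoref{prop: ballmaxn2} with the equalities $P(\Omega)=P(\Omega^*)$ and $\int H_\Omega=\int H_{\Omega^*}$ replaced by the Alexandrov--Fenchel inequalities $W_1(\Omega)\le W_1(\Omega^*)$ and $W_2(\Omega)\le W_2(\Omega^*)$. The only cosmetic difference is that the paper reads off directly from the explicit construction of $\mu_{k_m}$ in \autoref{teor:exist} that the optimal $h$ on the ball is the constant $m/P(\Omega^*)$, whereas you obtain this via uniqueness plus rotational symmetry; both are fine.
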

\begin{proof}
Since $W_k(\Omega)=W_k(\Omega^*)$, from the Alexandrov-Fenchel inequalities, we have that for every $0\le i\le k$
\[\left(\frac{W_i(\Omega^*)}{\omega_n}\right)^{\frac{1}{n-i}}= \left(\frac{W_k(\Omega^*)}{\omega_n}\right)^{\frac{1}{n-k}}=\left(\frac{W_k(\Omega)}{\omega_n}\right)^{\frac{1}{n-k}}\ge\left(\frac{W_i(\Omega)}{\omega_n}\right)^{\frac{1}{n-i}},\]
that is 
\[W_i(\Omega)\le W_i(\Omega^*).\] 
In particular, we have that
\[P(\Omega)\le P(\Omega^*)\]
and
\[\int_{\partial\Omega} H_\Omega\,d\Hn\le\int_{\partial \Omega^*} H_{\Omega^*}\,d\Hn.\]
Moreover, since 
\[
\left(\frac{W_k(\Omega^*)}{\omega_n}\right)^{\frac{1}{n-k}}= \left(\frac{W_1(\Omega^*)}{\omega_n}\right)^{\frac{1}{n-1}}=\left(\dfrac{P(\Omega^*)}{n\omega_n}\right)^{\frac{1}{n-1}}=\frac{n-1}{H_{\Omega^*}},
\]
the conditions \eqref{eq: condquermass1} and \eqref{eq: condquermass2} read as
\[\dfrac{\eps H_{\Omega^*}}{\beta}\le \dfrac{2}{3},\]
and
\[\dfrac{\eps H_{\Omega^*}}{\beta}\ge 2,\]
respectively. Therefore, the result can be obtained following the proof of \autoref{prop: ballmaxn2}.
\end{proof}
\subsection{Final remarks}
Let $\Omega,\Omega_0\subseteq\R^n$ be bounded open sets
with smooth boundary such that $\overline{\Omega}\subset\overline{\Omega}_0$ and let
\[
\mathcal{I}_\beta(\Omega,\Omega_0)=\inf\Set{\int_{\Omega_0} \abs{\nabla u}^2\,dx+\beta\int_{\partial\Omega_0}u^2\,d\Hn|\begin{aligned}&\, u\in H^1(\Omega_0),\\&\,u\ge1\,\text{ in }\overline{\Omega}\end{aligned}}.
\]
The results in \autoref{prop: ballmaxn2} and \autoref{prop: ballmaxn} are coherent with the ones proved in \cite{dellapietra} for the functional 
\[\mathcal{I}_{\beta,\delta}(\Omega)=\mathcal{I}_\beta(\Omega,\Omega+\delta B),\]
where $\Omega_0=\Omega+\delta B$ is the  Minkowski sum of $\Omega$ and the unit ball $B=B_1(0)\subset\R^n$. Namely, 
in \cite{dellapietra}, the following theorems are proved
\begin{teor}
    Let $\Omega$ be a connected, bounded, open set in $\R^2$ with piecewise $C^1$ boundary. Then
\[\mathcal{I}_{\beta,\delta}(\Omega)\le\mathcal{I}_{\beta,\delta}(\Omega^*),\] 
where $\Omega^*$ is the ball having the same perimeter as $\Omega$.
\end{teor}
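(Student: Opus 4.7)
The plan is to bound $\mathcal{I}_{\beta,\delta}(\Omega)$ from above by a one-dimensional energy built on the distance function from $\partial\Omega$, and to compare this bound with an explicit computation for $\Omega^*$. First, I would exploit radial symmetry to reduce $\mathcal{I}_{\beta,\delta}(\Omega^*)$ to a one-dimensional problem on $[R,R+\delta]$ with $P(\Omega^*)=2\pi R$: the minimizer is radially harmonic, and solving the resulting ODE with the Robin condition $u'(R+\delta)+\beta u(R+\delta)=0$ yields a closed form for $\mathcal{I}_{\beta,\delta}(\Omega^*)$.

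Next, for a general admissible $\Omega$ I would test with $u(x)=g(\dist(x,\partial\Omega))$ on $(\Omega+\delta B)\setminus\overline{\Omega}$, extended by $1$ on $\overline\Omega$, with $g\in H^1(0,\delta)$ and $g(0)=1$. The coarea formula (\autoref{coarea}) gives
\[
\int_{\Omega+\delta B}\abs{\nabla u}^2\,dx = \int_0^\delta g'(t)^2\,P(\Omega_t)\,dt, \qquad \int_{\partial(\Omega+\delta B)} u^2\,d\Hn = g(\delta)^2 P(\Omega_\delta),
\]
where $\Omega_t:=\Omega+tB$. Optimising over $g$ via the Euler--Lagrange equation $(P(\Omega_\cdot)g')'=0$ together with the Robin condition at $t=\delta$ produces
\[
\mathcal{I}_{\beta,\delta}(\Omega) \le \frac{\beta}{\dfrac{1}{P(\Omega_\delta)}+\beta\displaystyle\int_0^\delta\frac{dt}{P(\Omega_t)}},
\]
and an exact identity of this form holds for $\Omega^*$ from the radial computation.

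The geometric heart of the argument is the planar bound $P(\Omega_t)\le P(\Omega)+2\pi t=P(\Omega^*_t)$ for every $t\in[0,\delta]$. This would follow from Gauss--Bonnet: away from the cut locus, $\partial\Omega_t$ is made of pieces parallel to $\partial\Omega$ joined by circular arcs of radius $t$ at convex vertices of $\Omega$ and by inward corners at concave vertices. A direct check shows that a convex vertex of exterior angle $\theta>0$ contributes $+\theta$ to the right derivative of $P(\Omega_t)$, while a concave vertex of interior angle $\alpha>\pi$ contributes $-2\tan\bigl((\alpha-\pi)/2\bigr)\le \pi-\alpha$ by $\tan x\ge x$. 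Summing these corner contributions with the integrated curvature of the smooth pieces and invoking Gauss--Bonnet bounds the right derivative of $P(\Omega_t)$ by $2\pi$, and downward jumps at cut-locus events only decrease $P(\Omega_t)$ further.

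Given this perimeter inequality, the conclusion is immediate: $1/P(\Omega_t)\ge 1/P(\Omega^*_t)$ on $[0,\delta]$ forces the denominator in the test-function bound to dominate the one for $\Omega^*$, whence $\mathcal{I}_{\beta,\delta}(\Omega)\le\mathcal{I}_{\beta,\delta}(\Omega^*)$. The hard part will be rigorising the perimeter bound for merely piecewise $C^1$ connected domains, since one must handle cut-locus singularities of $\dist(\cdot,\partial\Omega)$ and possible topological changes of $\Omega_t$ (holes of $\Omega$ being filled as $t$ grows); a natural route is to approximate $\Omega$ by smooth convex perturbations, apply Steiner's identity $P(\Omega_t)=P(\Omega)+2\pi t$ in the convex case, and then pass to the limit using lower semicontinuity of the perimeter.
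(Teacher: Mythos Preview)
The paper does not prove this theorem; it is quoted from \cite{dellapietra} in the final-remarks section as background, so there is no in-paper argument to compare against. Your overall strategy---testing with competitors of the form $g(\dist(\cdot,\partial\Omega))$, reducing via the coarea formula to a one-dimensional problem in the profile $g$, optimising to obtain
\[
\mathcal{I}_{\beta,\delta}(\Omega)\le \frac{\beta}{\dfrac{1}{P(\Omega_\delta)}+\beta\displaystyle\int_0^\delta\frac{dt}{P(\Omega_t)}},
\]
noting that equality holds for the ball, and concluding via the planar bound $P(\Omega_t)\le P(\Omega)+2\pi t=P(\Omega^*_t)$---is sound and is indeed the natural approach to this kind of result.

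There is, however, a genuine flaw in your final paragraph. The proposal to ``approximate $\Omega$ by smooth convex perturbations'' and invoke Steiner's identity cannot work: a non-convex connected domain cannot be approximated by convex sets in any way that preserves the perimeter (the convex hull already has strictly smaller perimeter whenever $\Omega$ is non-convex), and no sequence of convex sets converges to a non-convex $\Omega$ in a topology that also controls the Minkowski enlargements $\Omega_t$. So this route is a dead end. By contrast, your Gauss--Bonnet sketch in the preceding paragraph is the correct mechanism: for a.e.\ $t$ the set $\Omega_t$ is a planar domain with, say, $h_t\ge 0$ holes, and the first variation of perimeter equals the total signed boundary curvature $\int_{\partial\Omega_t}\kappa\,ds=2\pi(1-h_t)\le 2\pi$, while cut-locus events (components merging, holes filling in) produce only downward jumps of $P(\Omega_t)$. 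Making this rigorous for merely piecewise $C^1$ boundaries---handling corner contributions, the structure of the cut locus of the outward distance, and the changes in topology of $\Omega_t$---is where the work lies; that is the argument to develop, not convex approximation.
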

\begin{teor}
    Let $n\ge3$ and let $\Omega$ be a bounded, open, convex set in $\R^n$. Then
\[\mathcal{I}_{\beta,\delta}(\Omega)\le\mathcal{I}_{\beta,\delta}(\Omega^*),\] 
where $\Omega^*$ is the ball such that $W_{n-1}(\Omega^*)=W_{n-1}(\Omega)$.
\end{teor}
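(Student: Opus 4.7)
I would produce a competitor $v$ on $\Omega+\delta B$ built from the radial minimizer of $\mathcal{I}_{\beta,\delta}(\Omega^*)$, use the distance to $\Omega$ to transport the ball's profile, and conclude via the Alexandrov-Fenchel inequalities (\autoref{alex-fench}) that the resulting energy is bounded above by $\mathcal{I}_{\beta,\delta}(\Omega^*)$. The key geometric step is that the constraint $W_{n-1}(\Omega)=W_{n-1}(\Omega^*)$ forces all lower quermassintegrals of $\Omega$ to be no larger than those of $\Omega^*$, hence $P(\Omega+tB)\le P(\Omega^*+tB)$ for every $t\ge 0$.

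\textbf{Construction and energy formulas.} Write $\Omega^*=B_R$. The minimizer $u^*$ on $\Omega^*+\delta B$ is $u^*\equiv 1$ on $\overline{\Omega^*}$ and radial on the spherical shell, say $u^*(x)=\phi(|x|-R)$ with $\phi$ decreasing, $\phi(0)=1$, and the Robin condition $\phi'(\delta)+\beta\phi(\delta)=0$. Since the spherical level sets of $|x|-R$ are exactly $\partial(\Omega^*+tB)$, the coarea formula gives
\[\mathcal{I}_{\beta,\delta}(\Omega^*)=\int_0^\delta \abs{\phi'(t)}^2 P(\Omega^*+tB)\,dt+\beta\,\phi(\delta)^2 P(\Omega^*+\delta B).\]
For a general convex $\Omega$, define $v(x)=\phi(d(x,\Omega))$ on $(\Omega+\delta B)\setminus\overline{\Omega}$ and $v\equiv 1$ on $\overline{\Omega}$: this is admissible (continuous across $\partial\Omega$ since $\phi(0)=1$), and by convexity $d(\cdot,\Omega)$ is $1$-Lipschitz with $\abs{\nabla d}=1$ a.e.\ outside $\overline{\Omega}$ and level sets $\{d=t\}=\partial(\Omega+tB)$. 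Applying coarea exactly as above yields
\[\mathcal{I}_{\beta,\delta}(\Omega)\le\int_0^\delta \abs{\phi'(t)}^2 P(\Omega+tB)\,dt+\beta\,\phi(\delta)^2 P(\Omega+\delta B).\]

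\textbf{Comparison via Alexandrov-Fenchel.} Writing the Steiner formula for $P(\Omega+tB)$ as a non-negative polynomial in $t$ whose coefficients are proportional to $W_1(\Omega),\dots,W_n(\Omega)$, the proof reduces to showing $W_j(\Omega)\le W_j(\Omega^*)$ for every $1\le j\le n-1$. For $j=n-1$ this is the standing assumption; for smaller $j$, \autoref{alex-fench} combined with the equality case on the ball yields
\[\left(\frac{W_j(\Omega)}{\omega_n}\right)^{1/(n-j)}\le \frac{W_{n-1}(\Omega)}{\omega_n}=\frac{W_{n-1}(\Omega^*)}{\omega_n}=\left(\frac{W_j(\Omega^*)}{\omega_n}\right)^{1/(n-j)}.\]
A termwise comparison of the two energy formulas then gives $\mathcal{I}_{\beta,\delta}(\Omega)\le \mathcal{I}_{\beta,\delta}(\Omega^*)$. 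The mildly delicate point is verifying that $v\in H^1(\Omega+\delta B)$ and that coarea applies with the identification $\{d(\cdot,\Omega)=t\}=\partial(\Omega+tB)$; both are classical for convex bodies and this is precisely where convexity of $\Omega$ is genuinely used. The geometric content of the argument is compressed into the single remark that Alexandrov-Fenchel, chained through the ball's equality case, turns a constraint on one quermassintegral into uniform domination of all lower ones (and therefore of $P(\Omega+tB)$ for every $t\ge 0$).
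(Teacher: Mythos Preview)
The paper does not actually prove this theorem: it is quoted from \cite{dellapietra} in the final-remarks subsection, so there is no in-paper proof to compare against. Your argument is nonetheless correct and is the standard one---transplant the ball's radial minimizer via $d(\cdot,\Omega)$, rewrite both energies by coarea as weighted integrals of $t\mapsto P(\cdot+tB)$, and compare coefficients in the Steiner polynomial using the Alexandrov--Fenchel chain through the fixed value of $W_{n-1}$. This is precisely the quermassintegral-comparison mechanism the present paper itself uses in the proof of \autoref{prop: ballmaxn}, so your approach is fully aligned with its techniques.

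One tiny omission: the Steiner expansion of $P(\Omega+tB)$ involves $W_1,\dots,W_n$, and you only argue $W_j(\Omega)\le W_j(\Omega^*)$ for $1\le j\le n-1$; for the termwise comparison to go through you should also record that $W_n(\Omega)=\omega_n=W_n(\Omega^*)$, so the top coefficient matches trivially.
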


Finally, in \cite{BNNT}, it is proved the following
\begin{teor}
    The solution to the problem
\[
\min\Set{I_{\beta}(\Omega,\Omega_0)|
\begin{aligned}
&\abs{\Omega}=\omega_n, \\ &\abs{\Omega_0}\le M 
\end{aligned}
}
\]
consists of two concentric balls.
\end{teor}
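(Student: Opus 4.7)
My plan is to carry out a Schwarz symmetrization of the triple $(\Omega,\Omega_0,u)$, where $u\in H^1(\Omega_0)$ is the unique minimizer attaining $\mathcal{I}_\beta(\Omega,\Omega_0)$, and compare the resulting competitor with the original admissible pair. The minimizer $u$ is harmonic in $\Omega_0\setminus\overline{\Omega}$, equals $1$ on $\overline{\Omega}$, and satisfies $\partial u/\partial\nu+\beta u=0$ on $\partial\Omega_0$; by the strong maximum principle applied on the insulating region, $0<u<1$ there, and $\operatorname{ess\,inf}_{\Omega_0}u$ is attained on $\partial\Omega_0$.

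I set $R=(\abs{\Omega_0}/\omega_n)^{1/n}$ and take $u^*\colon B_R\to\R$ to be the spherical decreasing rearrangement of $u$, testing the functional against the competitor $(B_1,B_R,u^*)$; this is admissible because $\abs{B_1}=\omega_n$, $\abs{B_R}=\abs{\Omega_0}\le M$, and the requirement $u\ge 1$ on $\Omega$ forces $\abs{\{u\ge 1\}}\ge\omega_n$, so that $u^*\ge 1$ on $B_1$. The gradient term decreases by the P\'olya--Szeg\H{o} inequality,
\[
\int_{B_R}\abs{\nabla u^*}^2\,dx\le\int_{\Omega_0}\abs{\nabla u}^2\,dx,
\]
while for the boundary term, writing $m:=u^*(R)=\operatorname{ess\,inf}_{\Omega_0}u=\operatorname{ess\,inf}_{\partial\Omega_0}u$, the isoperimetric inequality $P(B_R)\le P(\Omega_0)$ gives
\[
\int_{\partial B_R}(u^*)^2\,d\Hn = m^2\,P(B_R)\le m^2\,P(\Omega_0)\le\int_{\partial\Omega_0}u^2\,d\Hn.
\]
Summing these two estimates yields $\mathcal{I}_\beta(B_1,B_R)\le\mathcal{I}_\beta(\Omega,\Omega_0)$, so the infimum is attained on a pair of concentric balls.

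The hard part will be the rigidity step: one must argue that equality throughout forces any minimizing pair to actually be concentric balls (not merely to have the same energy as one). Equality in the isoperimetric inequality forces $\Omega_0$ to be a ball; equality in the pointwise bound $m\le u$ on $\partial\Omega_0$ forces $u$ to be constant on $\partial\Omega_0$; and the Brothers--Ziemer characterization of equality in the P\'olya--Szeg\H{o} inequality then forces $u$ to be a translate of a radially decreasing function, so that both $\Omega=\{u\ge 1\}$ and $\Omega_0$ emerge as concentric balls. Existence of a minimizer is handled separately by a standard direct argument in the class of sets of finite perimeter.
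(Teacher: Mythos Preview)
The paper does not prove this theorem; it is quoted from \cite{BNNT} in the final remarks without proof, so there is nothing in the paper to compare your argument against directly.

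On its own merits, your argument has a genuine gap at the P\'olya--Szeg\H{o} step. The inequality
\[
\int_{B_R}\abs{\nabla u^*}^2\,dx\le\int_{\Omega_0}\abs{\nabla u}^2\,dx
\]
is \emph{not} valid for $u\in H^1(\Omega_0)$ with nonzero trace on $\partial\Omega_0$. The classical P\'olya--Szeg\H{o} inequality requires that the zero extension of $u$ lie in $H^1(\R^n)$, i.e.\ that $u\in H^1_0(\Omega_0)$; here the Robin minimizer is strictly positive and, crucially, in general \emph{nonconstant} on $\partial\Omega_0$, so this hypothesis fails. A concrete counterexample: on the thin rectangle $\Omega_0=(0,L)\times(0,1)$ take $u(x,y)=m+(1-m)\bigl(1-\abs{2x/L-1}\bigr)$. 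Then $\int_{\Omega_0}\abs{\nabla u}^2=4(1-m)^2/L$, whereas the decreasing rearrangement onto the disc of area $L$ has Dirichlet integral $2\pi(1-m)^2$, which is strictly larger as soon as $L>2/\pi$. The mechanism is that the superlevel sets $\{u>t\}$ meet $\partial\Omega_0$, so only their \emph{relative} perimeter inside $\Omega_0$ enters the coarea/Cauchy--Schwarz lower bound for the Dirichlet energy, and this relative perimeter can be far smaller than the full perimeter of the rearranged level set.

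Your boundary estimate $m^2 P(B_R)\le \int_{\partial\Omega_0}u^2\,d\Hn$ is correct, and in examples like the one above the saving on $\beta\int_{\partial\Omega_0}u^2$ does in fact outweigh the loss on the gradient. But your proof bounds the two terms \emph{separately}, and that is exactly what cannot work. A correct symmetrization argument must treat the Dirichlet and boundary contributions jointly---for instance by simultaneously tracking, for each level $t$, the relative perimeter of $\{u>t\}$ in $\Omega_0$ and the measure $\Hn(\{u>t\}\cap\partial\Omega_0)$, and combining them before invoking isoperimetry. This coupling is precisely what makes rearrangement for Robin-type energies substantially harder than in the Dirichlet case, and it is the content of the proof in the cited reference.
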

The previous theorem naturally leads to the following question 
\begin{open}
Prove or disprove that the problem
\[
\min\Set{\mathcal{G}_\eps(\Omega,h)| \begin{aligned} &\abs{\Omega}=\omega_n,\\ & h\in\mathcal{H}_m\end{aligned}}
\]
admits the couple $(B_1,h^*)$ as a solution, where $h^*$ is a constant function.
\end{open}

\printbibliography[heading=bibintoc]
\Addresses
\end{document}